\subjclass[2020]{11F30, 11F46}
\keywords{Fourier coefficient, Duke-Imamoglu-Ikeda lift, Siegel series}
\begin{document}
\title [Fourier coefficients of the Duke-Imamoglu-Ikeda lift]{Estimates for the Fourier coefficients of the Duke-Imamoglu-Ikeda lift
}  
\author{Tamotsu IKEDA}
\address{Graduate school of mathematics, Kyoto University, Kitashirakawa, Kyoto, 606-8502, Japan}
\email{ikeda@math.kyoto-u.ac.jp}
\author{Hidenori KATSURADA}
\address{Department of Mathematics, Hokkaido, University, Kita 10, Nishi 8, Kitaku, Sapporo, Hokkaido, 060-0810, Japan, and 
Muroran Institute of Technology
27-1 Mizumoto, Muroran, 050-8585, Japan}
\email{hidenori@mmm.muroran-it.ac.jp}
\thanks{The research was partially supported by JSPS KAKENHI Grant Number 17H02834, 16H03919, 22K03228 and 21K03152.
}

\date {July 5, 2022}

\maketitle
\newcommand{\alp}{\alpha}
\newcommand{\bet}{\beta}
\newcommand{\gam}{\gamma}
\newcommand{\del}{\delta}
\newcommand{\eps}{\epsilon}
\newcommand{\zet}{\zeta}
\newcommand{\tht}{\theta}
\newcommand{\iot}{\iota}
\newcommand{\kap}{\kappa}
\newcommand{\lam}{\lambda}
\newcommand{\sig}{\sigma}
\newcommand{\ups}{\upsilon}
\newcommand{\ome}{\omega}
\newcommand{\vep}{\varepsilon}
\newcommand{\vth}{\vartheta}
\newcommand{\vpi}{\varpi}
\newcommand{\vrh}{\varrho}
\newcommand{\vsi}{\varsigma}
\newcommand{\vph}{\varphi}
\newcommand{\Gam}{\Gamma}
\newcommand{\Del}{\Delta}
\newcommand{\Tht}{\Theta}
\newcommand{\Lam}{\Lambda}
\newcommand{\Sig}{\Sigma}
\newcommand{\Ups}{\Upsilon}
\newcommand{\Ome}{\Omega}


\newcommand{\frka}{{\mathfrak a}}    \newcommand{\frkA}{{\mathfrak A}}
\newcommand{\frkb}{{\mathfrak b}}    \newcommand{\frkB}{{\mathfrak B}}
\newcommand{\frkc}{{\mathfrak c}}    \newcommand{\frkC}{{\mathfrak C}}
\newcommand{\frkd}{{\mathfrak d}}    \newcommand{\frkD}{{\mathfrak D}}
\newcommand{\frke}{{\mathfrak e}}    \newcommand{\frkE}{{\mathfrak E}}
\newcommand{\frkf}{{\mathfrak f}}    \newcommand{\frkF}{{\mathfrak F}}
\newcommand{\frkg}{{\mathfrak g}}    \newcommand{\frkG}{{\mathfrak G}}
\newcommand{\frkh}{{\mathfrak h}}    \newcommand{\frkH}{{\mathfrak H}}
\newcommand{\frki}{{\mathfrak i}}    \newcommand{\frkI}{{\mathfrak I}}
\newcommand{\frkj}{{\mathfrak j}}    \newcommand{\frkJ}{{\mathfrak J}}
\newcommand{\frkk}{{\mathfrak k}}    \newcommand{\frkK}{{\mathfrak K}}
\newcommand{\frkl}{{\mathfrak l}}    \newcommand{\frkL}{{\mathfrak L}}
\newcommand{\frkm}{{\mathfrak m}}    \newcommand{\frkM}{{\mathfrak M}}
\newcommand{\frkn}{{\mathfrak n}}    \newcommand{\frkN}{{\mathfrak N}}
\newcommand{\frko}{{\mathfrak o}}    \newcommand{\frkO}{{\mathfrak O}}
\newcommand{\frkp}{{\mathfrak p}}    \newcommand{\frkP}{{\mathfrak P}}
\newcommand{\frkq}{{\mathfrak q}}    \newcommand{\frkQ}{{\mathfrak Q}}
\newcommand{\frkr}{{\mathfrak r}}    \newcommand{\frkR}{{\mathfrak R}}
\newcommand{\frks}{{\mathfrak s}}    \newcommand{\frkS}{{\mathfrak S}}
\newcommand{\frkt}{{\mathfrak t}}    \newcommand{\frkT}{{\mathfrak T}}
\newcommand{\frku}{{\mathfrak u}}    \newcommand{\frkU}{{\mathfrak U}}
\newcommand{\frkv}{{\mathfrak v}}    \newcommand{\frkV}{{\mathfrak V}}
\newcommand{\frkw}{{\mathfrak w}}    \newcommand{\frkW}{{\mathfrak W}}
\newcommand{\frkx}{{\mathfrak x}}    \newcommand{\frkX}{{\mathfrak X}}
\newcommand{\frky}{{\mathfrak y}}    \newcommand{\frkY}{{\mathfrak Y}}
\newcommand{\frkz}{{\mathfrak z}}    \newcommand{\frkZ}{{\mathfrak Z}}


\newcommand{\bfa}{{\mathbf a}}    \newcommand{\bfA}{{\mathbf A}}
\newcommand{\bfb}{{\mathbf b}}    \newcommand{\bfB}{{\mathbf B}}
\newcommand{\bfc}{{\mathbf c}}    \newcommand{\bfC}{{\mathbf C}}
\newcommand{\bfd}{{\mathbf d}}    \newcommand{\bfD}{{\mathbf D}}
\newcommand{\bfe}{{\mathbf e}}    \newcommand{\bfE}{{\mathbf E}}
\newcommand{\bff}{{\mathbf f}}    \newcommand{\bfF}{{\mathbf F}}
\newcommand{\bfg}{{\mathbf g}}    \newcommand{\bfG}{{\mathbf G}}
\newcommand{\bfh}{{\mathbf h}}    \newcommand{\bfH}{{\mathbf H}}
\newcommand{\bfi}{{\mathbf i}}    \newcommand{\bfI}{{\mathbf I}}
\newcommand{\bfj}{{\mathbf j}}    \newcommand{\bfJ}{{\mathbf J}}
\newcommand{\bfk}{{\mathbf k}}    \newcommand{\bfK}{{\mathbf K}}
\newcommand{\bfl}{{\mathbf l}}    \newcommand{\bfL}{{\mathbf L}}
\newcommand{\bfm}{{\mathbf m}}    \newcommand{\bfM}{{\mathbf M}}
\newcommand{\bfn}{{\mathbf n}}    \newcommand{\bfN}{{\mathbf N}}
\newcommand{\bfo}{{\mathbf o}}    \newcommand{\bfO}{{\mathbf O}}
\newcommand{\bfp}{{\mathbf p}}    \newcommand{\bfP}{{\mathbf P}}
\newcommand{\bfq}{{\mathbf q}}    \newcommand{\bfQ}{{\mathbf Q}}
\newcommand{\bfr}{{\mathbf r}}    \newcommand{\bfR}{{\mathbf R}}
\newcommand{\bfs}{{\mathbf s}}    \newcommand{\bfS}{{\mathbf S}}
\newcommand{\bft}{{\mathbf t}}    \newcommand{\bfT}{{\mathbf T}}
\newcommand{\bfu}{{\mathbf u}}    \newcommand{\bfU}{{\mathbf U}}
\newcommand{\bfv}{{\mathbf v}}    \newcommand{\bfV}{{\mathbf V}}
\newcommand{\bfw}{{\mathbf w}}    \newcommand{\bfW}{{\mathbf W}}
\newcommand{\bfx}{{\mathbf x}}    \newcommand{\bfX}{{\mathbf X}}
\newcommand{\bfy}{{\mathbf y}}    \newcommand{\bfY}{{\mathbf Y}}
\newcommand{\bfz}{{\mathbf z}}    \newcommand{\bfZ}{{\mathbf Z}}


\newcommand{\cala}{{\mathcal A}}
\newcommand{\calb}{{\mathcal B}}
\newcommand{\calc}{{\mathcal C}}
\newcommand{\cald}{{\mathcal D}}
\newcommand{\cale}{{\mathcal E}}
\newcommand{\calf}{{\mathcal F}}
\newcommand{\calg}{{\mathcal G}}
\newcommand{\calh}{{\mathcal H}}
\newcommand{\cali}{{\mathcal I}}
\newcommand{\calj}{{\mathcal J}}
\newcommand{\calk}{{\mathcal K}}
\newcommand{\call}{{\mathcal L}}
\newcommand{\calm}{{\mathcal M}}
\newcommand{\caln}{{\mathcal N}}
\newcommand{\calo}{{\mathcal O}}
\newcommand{\calp}{{\mathcal P}}
\newcommand{\calq}{{\mathcal Q}}
\newcommand{\calr}{{\mathcal R}}
\newcommand{\cals}{{\mathcal S}}
\newcommand{\calt}{{\mathcal T}}
\newcommand{\calu}{{\mathcal U}}
\newcommand{\calv}{{\mathcal V}}
\newcommand{\calw}{{\mathcal W}}
\newcommand{\calx}{{\mathcal X}}
\newcommand{\caly}{{\mathcal Y}}
\newcommand{\calz}{{\mathcal Z}}


\newcommand{\scra}{{\mathscr A}}
\newcommand{\scrb}{{\mathscr B}}
\newcommand{\scrc}{{\mathscr C}}
\newcommand{\scrd}{{\mathscr D}}
\newcommand{\scre}{{\mathscr E}}
\newcommand{\scrf}{{\mathscr F}}
\newcommand{\scrg}{{\mathscr G}}
\newcommand{\scrh}{{\mathscr H}}
\newcommand{\scri}{{\mathscr I}}
\newcommand{\scrj}{{\mathscr J}}
\newcommand{\scrk}{{\mathscr K}}
\newcommand{\scrl}{{\mathscr L}}
\newcommand{\scrm}{{\mathscr M}}
\newcommand{\scrn}{{\mathscr N}}
\newcommand{\scro}{{\mathscr O}}
\newcommand{\scrp}{{\mathscr P}}
\newcommand{\scrq}{{\mathscr Q}}
\newcommand{\scrr}{{\mathscr R}}
\newcommand{\scrs}{{\mathscr S}}
\newcommand{\scrt}{{\mathscr T}}
\newcommand{\scru}{{\mathscr U}}
\newcommand{\scrv}{{\mathscr V}}
\newcommand{\scrw}{{\mathscr W}}
\newcommand{\scrx}{{\mathscr X}}
\newcommand{\scry}{{\mathscr Y}}
\newcommand{\scrz}{{\mathscr Z}}


\newcommand{\AAA}{{\mathbb A}} 
\newcommand{\BB}{{\mathbb B}}
\newcommand{\CC}{{\mathbb C}}
\newcommand{\DD}{{\mathbb D}}
\newcommand{\EE}{{\mathbb E}}
\newcommand{\FF}{{\mathbb F}}
\newcommand{\GG}{{\mathbb G}}
\newcommand{\HH}{{\mathbb H}}
\newcommand{\II}{{\mathbb I}}
\newcommand{\JJ}{{\mathbb J}}
\newcommand{\KK}{{\mathbb K}}
\newcommand{\LL}{{\mathbb L}}
\newcommand{\MM}{{\mathbb M}}
\newcommand{\NN}{{\mathbb N}}
\newcommand{\OO}{{\mathbb O}}
\newcommand{\PP}{{\mathbb P}}
\newcommand{\QQ}{{\mathbb Q}}
\newcommand{\RR}{{\mathbb R}}
\newcommand{\SSS}{{\mathbb S}} 
\newcommand{\TT}{{\mathbb T}}
\newcommand{\UU}{{\mathbb U}}
\newcommand{\VV}{{\mathbb V}}
\newcommand{\WW}{{\mathbb W}}
\newcommand{\XX}{{\mathbb X}}
\newcommand{\YY}{{\mathbb Y}}
\newcommand{\ZZ}{{\mathbb Z}}


\newcommand{\tta}{\hbox{\tt a}}    \newcommand{\ttA}{\hbox{\tt A}}
\newcommand{\ttb}{\hbox{\tt b}}    \newcommand{\ttB}{\hbox{\tt B}}
\newcommand{\ttc}{\hbox{\tt c}}    \newcommand{\ttC}{\hbox{\tt C}}
\newcommand{\ttd}{\hbox{\tt d}}    \newcommand{\ttD}{\hbox{\tt D}}
\newcommand{\tte}{\hbox{\tt e}}    \newcommand{\ttE}{\hbox{\tt E}}
\newcommand{\ttf}{\hbox{\tt f}}    \newcommand{\ttF}{\hbox{\tt F}}
\newcommand{\ttg}{\hbox{\tt g}}    \newcommand{\ttG}{\hbox{\tt G}}
\newcommand{\tth}{\hbox{\tt h}}    \newcommand{\ttH}{\hbox{\tt H}}
\newcommand{\tti}{\hbox{\tt i}}    \newcommand{\ttI}{\hbox{\tt I}}
\newcommand{\ttj}{\hbox{\tt j}}    \newcommand{\ttJ}{\hbox{\tt J}}
\newcommand{\ttk}{\hbox{\tt k}}    \newcommand{\ttK}{\hbox{\tt K}}
\newcommand{\ttl}{\hbox{\tt l}}    \newcommand{\ttL}{\hbox{\tt L}}
\newcommand{\ttm}{\hbox{\tt m}}    \newcommand{\ttM}{\hbox{\tt M}}
\newcommand{\ttn}{\hbox{\tt n}}    \newcommand{\ttN}{\hbox{\tt N}}
\newcommand{\tto}{\hbox{\tt o}}    \newcommand{\ttO}{\hbox{\tt O}}
\newcommand{\ttp}{\hbox{\tt p}}    \newcommand{\ttP}{\hbox{\tt P}}
\newcommand{\ttq}{\hbox{\tt q}}    \newcommand{\ttQ}{\hbox{\tt Q}}
\newcommand{\ttr}{\hbox{\tt r}}    \newcommand{\ttR}{\hbox{\tt R}}
\newcommand{\tts}{\hbox{\tt s}}    \newcommand{\ttS}{\hbox{\tt S}}
\newcommand{\ttt}{\hbox{\tt t}}    \newcommand{\ttT}{\hbox{\tt T}}
\newcommand{\ttu}{\hbox{\tt u}}    \newcommand{\ttU}{\hbox{\tt U}}
\newcommand{\ttv}{\hbox{\tt v}}    \newcommand{\ttV}{\hbox{\tt V}}
\newcommand{\ttw}{\hbox{\tt w}}    \newcommand{\ttW}{\hbox{\tt W}}
\newcommand{\ttx}{\hbox{\tt x}}    \newcommand{\ttX}{\hbox{\tt X}}
\newcommand{\tty}{\hbox{\tt y}}    \newcommand{\ttY}{\hbox{\tt Y}}
\newcommand{\ttz}{\hbox{\tt z}}    \newcommand{\ttZ}{\hbox{\tt Z}}

\newcommand{\phm}{\phantom}
\newcommand{\ds}{\displaystyle }
\newcommand{\smallstrut}{\vphantom{\vrule height 3pt }}
\def\bdm #1#2#3#4{\left(
\begin{array} {c|c}{\ds{#1}}
 & {\ds{#2}} \\ \hline
{\ds{#3}\vphantom{\ds{#3}^1}} &  {\ds{#4}}
\end{array}
\right)}
\newcommand{\wtd}{\widetilde }
\newcommand{\bsl}{\backslash }
\newcommand{\GL}{{\mathrm{GL}}}
\newcommand{\SL}{{\mathrm{SL}}}
\newcommand{\GSp}{{\mathrm{GSp}}}
\newcommand{\PGSp}{{\mathrm{PGSp}}}
\newcommand{\SP}{{\mathrm{Sp}}}
\newcommand{\SO}{{\mathrm{SO}}}
\newcommand{\SU}{{\mathrm{SU}}}
\newcommand{\Ind}{\mathrm{Ind}}
\newcommand{\Hom}{{\mathrm{Hom}}}
\newcommand{\Ad}{{\mathrm{Ad}}}
\newcommand{\Sym}{{\mathrm{Sym}}}
\newcommand{\Mat}{\mathrm{M}}
\newcommand{\sgn}{\mathrm{sgn}}
\newcommand{\trs}{\,^t\!}
\newcommand{\iu}{\sqrt{-1}}
\newcommand{\oo}{\hbox{\bf 0}}
\newcommand{\ono}{\hbox{\bf 1}}
\newcommand{\smallcirc}{\lower .3em \hbox{\rm\char'27}\!}
\newcommand{\bAf}{\bA_{\hbox{\eightrm f}}}
\newcommand{\thalf}{{\textstyle{\frac12}}}
\newcommand{\shp}{\hbox{\rm\char'43}}
\newcommand{\Gal}{\operatorname{Gal}}

\newcommand{\bdel}{{\boldsymbol{\delta}}}
\newcommand{\bchi}{{\boldsymbol{\chi}}}
\newcommand{\bgam}{{\boldsymbol{\gamma}}}
\newcommand{\bome}{{\boldsymbol{\omega}}}
\newcommand{\bpsi}{{\boldsymbol{\psi}}}
\newcommand{\GK}{\mathrm{GK}}
\newcommand{\EGK}{\mathrm{EGK}}
\newcommand{\ord}{\mathrm{ord}}
\newcommand{\diag}{\mathrm{diag}}
\newcommand{\ua}{{\underline{a}}}
\newcommand{\ZZn}{\ZZ_{\geq 0}^n}

\theoremstyle{plain}
\newtheorem{theorem}{Theorem}[section]
\newtheorem{lemma}[theorem]{Lemma}
\newtheorem{proposition}[theorem]{Proposition}
\newtheorem{corollary}[theorem]{Corollary}
\newtheorem{conjecture}[theorem]{Conjecture}
\newtheorem{definition}[theorem]{Definition}
\newtheorem{remark}[theorem]{{\bf Remark}}
\newcommand{\supp}{\mathrm{supp}}

%

\def\mattwono(#1;#2;#3;#4){\begin{array}{cc}
                               #1  & #2 \\
                               #3  & #4
                                      \end{array}}

\def\mattwo(#1;#2;#3;#4){\left(\begin{matrix}
                               #1 & #2 \\
                               #3  & #4
                                      \end{matrix}\right)}
 \def\smallmattwo(#1;#2;#3;#4){\left(\begin{smallmatrix}
                               #1 & #2 \\
                               #3  & #4
                                      \end{smallmatrix}\right)}                                     
                                      
 \def\matthree(#1;#2;#3;#4;#5;#6;#7;#8;#9){\left(\begin{matrix}
                               #1 & #2  & #3\\
                               #4  & #5 & #6\\
                               #7  & #8 &#9 
                                      \end{matrix}\right)}                                     
                                      
\def\mattwo(#1;#2;#3;#4){\left(\begin{matrix}
                               #1 & #2 \\
                               #3  & #4
                                      \end{matrix}\right)}  

\def\rowthree(#1;#2;#3){\begin{matrix}
                               #1   \\
                               #2  \\
                               #3
                                      \end{matrix}}  
\def\columnthree(#1;#2;#3){\begin{matrix}
                               #1   &   #2  &  #3
                                      \end{matrix}}  
                                      
\def\rowfive(#1;#2;#3;#4;#5){\begin{array}{lllll}
                               #1   \\
                               #2  \\
                               #3 \\
                               #4 \\
                               #5                              
                                      \end{array}} 

\def\columnfive(#1;#2;#3;#4;#5){\begin{array}{lllll}
                               #1   &   #2  &  #3 & #4 & #5
                                \end{array}}

\def\mattwothree(#1;#2;#3;#4;#5;#6){\begin{matrix}
                               #1 & #2  & #3  \\
                               #4 & #5  & #6
                                      \end{matrix}}  
\def\matthreetwo(#1;#2;#3;#4;#5;#6){\begin{array}{lc}
                               #1  & #2  \\
                               #3  & #4 \\
                               #5  & #6
                                      \end{array}}  
\def\columnthree(#1;#2;#3){\begin{matrix}
                               #1 & #2 & #3  
                                  \end{matrix}}  
\def\rowthree(#1;#2;#3){\begin{matrix}
                               #1 \\
                                #2 \\
                                #3  
                                  \end{matrix}}  
\def\smallddots{\mathinner
{\mskip1mu\raise3pt\vbox{\kern7pt\hbox{.}}
\mskip1mu\raise0pt\hbox{.}
\mskip1mu\raise-3pt\hbox{.}\mskip1mu}}

\begin{abstract} Let $k$ and $n$ be positive even integers. 
For a Hecke eigenform $h$ in the Kohnen plus subspace of weight $k-n/2+1/2$ for $\varGamma_0(4)$, let $I_n(h)$ be the Duke-Imamoglu-Ikeda lift of $h$ to the space of cusp forms of weight $k$ for $Sp_n(\ZZ)$. We then give an estimate of the Fourier coefficients of $I_n(h)$. It is better than the usual Hecke bound for the Fourier coefficients of a Siegel cusp form.

\end{abstract}

\section{Introduction}
Let $\varGamma^{(n)}=Sp_n(\ZZ)$ be the Siegel modular group of genus $n$, and $S_k(\varGamma^{(n)})$ the space of cusp forms of weight $k$ for $\varGamma^{(n)}$. Then we have the following Fourier expansion:
\[F(Z)=\sum_{B} c_F(B) \exp(2\pi \sqrt{-1} \mathrm{tr}(BZ)),\]
where $B$ runs over all positive definite half-integral matrices of size $n$. 
It is an interesting problem to estimate $c_F(B)$. By the standard method we obtain the following estimate, called the Hecke bound, of $c_F(B)$ for $B \in \calh_n(\ZZ)_{>0}$:
\[c_F(B) \ll_F (\det (B))^{k/2}.\]
However it is weak in general. In the case $n=1$, we have Deligne's estimate (cf. \cite{De74}), which is best possible, and from now on we consider the case $n \ge 2$. Then there are several improvements of the Hecke bound (cf. \cite{BK93}, \cite{BR88}, \cite{Breulmann96}, \cite{Bringmann}, \cite{F87},\cite{Ki84},\cite{Ko93},\cite{RW89}).  Among others, B\"ocherer and Kohnen \cite{BK93} proved that
\begin{align*}
c_F(B) \ll_{\vep,F} \det (B)^{{k \over 2}-{1 \over 2n} -(1-{1 \over n}) \alpha_n+\vep} \ (\vep>0) \tag{$*$}
\end{align*}
if $k >n+1$. Here
$$\alpha_n=\Big(4(n-1)+4\Big[{n-1 \over 2}\Big]+{2 \over n+2}\Big)^{-1}.$$
In this paper, we improve this bound for the Duke-Imamoglu-Ikeda lift $I_n(h)$ of a cuspidal Hecke eigenform $h$ in $S_{k-n/2+1/2}^+(\varGamma_0(4))$ to $S_k(\varGamma^{(n)})$. (As for a precise definition of the Duke-Imamoglu-Ikeda lift, see Section 3.)
That is, we prove the following estimate:

\bigskip

(Theorem \ref{th.main-result}) We have
\begin{align*}
c_{I_n(h)}(B) \ll_{\vep,I_n(h)} \frkd_B^{-n/4+5/12}(\det (2B))^{(k-1)/2+\vep} \ (\vep>0) \tag{**}
\end{align*}
for any  $B \in \calh_n(\ZZ)_{>0}$,
where $\frkd_B$ is the discriminant of $\QQ(\sqrt{(-1)^{n/2} \det B})/\QQ$.

\bigskip

From the above result, we have
\begin{align*}
c_{I_n(h)}(B)|\ll_{\vep,I_n(h) } (\det (2B))^{(k-1)/2+\vep} \ (\vep>0) \tag{***}
\end{align*}
for any  $B \in \calh_n(\ZZ)_{>0}$.
We note that our estimate is slightly stronger than (*). We  explain how to obtain the estimate (**). By definition, $c_{I_n(h)}(B)$ is expressed in terms of the $|\frkd_B|$-th Fourier coefficient $c_h(|\frkd_B|)$ of $h$, and $\prod_p \widetilde F_p(B,\alpha_p)$, where for a prime number $p$, $\widetilde F_p(B,X)$ is the polynomial in $X$ and $X^{-1}$ defined in \cite{IK22}, and $\alpha_p$ is a certain complex number such that $|\alpha_p|=1$ (cf. Section 3). In view of Corollary \ref{cor.estimate-of-FH}, Theorem \ref{th.H-and-Siegel-series}, we can estimate $\widetilde F_p(B,a)$ for any $a \in \CC$ in purely combinatorial method (cf. Theorem \ref{th.estimate-of-F}), 
and therefore we obtain the following estimate (cf. Theorem \ref{th.refined-estimate} (1)):
 \begin{align*}
&|c_{I_n(h)}(B)| \le |c_h(|\frkd_B|)|\frkf_B^{k-1}
\prod_{i=1}^n \prod_{p | \frkf_B}(1+\frke_{i,B}^{(p)})),
\end{align*}
where $\frkf_B=\sqrt{\det (2B)/|\frkd_B|}$, and $\frke_{i,B}^{(p)}$ is that defined after Remark \ref{rem.unstability-of-Kohnen-plus-space}.
On the other hand, by \cite{CI00}, we obtain a reasonable estimate of $c_h(|\frkd_B|)$. Combining these two estimates, we obtain the estimate (**). We also obtain another estimate for $c_{I_n(h)}(B)$ (cf. Theorem \ref{th.main-result2}). 

It is expected that we can obtain a similar estimate for the Fourier coefficient of the lift constructed in \cite{IY20}.

This paper is organized as follows. In Section 2, we review the Siegel series. In Section 3, we state our main result. In Section 4, we review the Gross-Keating invariant. In Section 5, we give an estimate of $\widetilde F_p(B,\alpha_p)$, and in Section 6, we prove our main result. 

We thank Valentin Blomer for many valuable discussions, by which this paper is motivated. We also thank him for many useful comments, by which our main result has been improved greatly.

{\bf Notation} Let $R$ be a commutative ring. We denote by $R^{\times}$ the group of units in $R$. We denote by $M_{mn}(R)$ the set of $(m,n)$ matrices with entries in $R$, and especially write $M_n(R)=M_{nn}(R)$. 
We often identify an element $a$ of $R$ and the matrix $(a)$ of degree 1 whose component is $a$. If $m$ or $n$ is 0, we understand an element of $M_{mn}(R)$ is the {\it empty matrix} and denote it by $\emptyset$. Let $GL_n(R)$ be the group consisting of all invertible elements of $M_n(R)$, and $\mathrm{Sym}_n(R)$ the set of symmetric matrices of degree $n$ with entries in $R$.  For a  semigroup  $S$ we put $S^{\Box}=\{s^2 \ | \ s \in S \}$.
Let $R$ be an integral domain of characteristic different from $2$, and $K$ its quotient field.  We say that an element $A$ of $\mathrm{Sym}_n(K)$ is non-degenerate if the determinant  $\det A$ of $A$ is non-zero.  For a subset $S$ of $\mathrm{Sym}_n(K)$, we denote by
$S^{{\rm{nd}}}$ the subset of $S$ consisting of non-degenerate matrices. 
We say that
a symmetric matrix $A=(a_{ij})$ of degree $n$ with entries in $K$ is half-integral over $R$ if $a_{ii} \ (i=1,...,n)$ and $2a_{ij} \ (1 \le i \not= j \le n)$ belong to $R$. We denote by $\calh_n(R)$ the set of half-integral matrices of degree $n$ over $R$. 
We note that $\calh_n(R)=\mathrm{Sym}_n(R)$ if $R$ contains the inverse of $2$. 
We denote by $\ZZ_{> 0}$ and $\ZZ_{\ge 0}$ the set of positive integers and the set of non-negative integers, respectively.  
 For an $(m,n)$ matrix $X$ and an $(m,m)$ matrix $A$, we write $A[X] ={}^tXAX$, where $^t X$ denotes the transpose of $X$.
 Let $G$ be a subgroup of $GL_n(K)$. Then we say that two elements $B$ and $B'$ in $\mathrm{Sym}_n(K)$  are $G$-equivalent if there is an element
$g$ of $G$ such that $B'=B[g]$. 
 We denote by $1_m$ the unit matrix of degree $m$ and by $O_{m,n}$ the zero matrix of type $(m,n)$. We sometimes abbreviate $O_{m,n}$ as $O$ if there is no fear of  confusion.
For two square matrices $X$ and $Y$ we write $X \bot Y =\mattwo(X;O;O;Y)$. We often write $x \bot Y$ instead of $(x) \bot Y$ if $(x)$ is  a matrix of degree 1. 
For an $m \times n$ matrix,  $B=(b_{ij})$ and sequences ${\bf i}=(i_1,\ldots,i_r), (j_1,\ldots,j_r)$ of integers such that $1 \le i_1, \ldots, i_r \le m, 1 \le j_1,\ldots,j_r \le n$, we put 
\[B\begin{pmatrix}{\bf i}   \\ {\bf j}\end{pmatrix}=  (b_{i_k,j_l})_{1 \le k,l \le r}.\]

\section{Siegel series}
Let $F$ be a non-archimedean local field of characteristic $0$, and $\frko=\frko_F$ its ring of integers.
The maximal ideal and  the residue field of $\frko$ is denoted by $\frkp$ and $\frkk$, respectively.
We fix a prime element $\vpi$ of $\frko$ once and for all.
The cardinality of $\frkk$ is denoted by $q$.
Let $\mathrm{ord}=\mathrm{ord}_{\frkp}$ denote additive valuation on $F$  normalized so that $\mathrm{ord}(\vpi)=1$. If $a=0$,  We write $\mathrm{ord}(0)=\infty$
and we  make the convention that $\mathrm{ord}(0) > \mathrm{ord}(b)$ for any $b \in F^{\times}$.
We also denote by $|*|_{\frkp}$ denote the valuation on $F$ normalized so that $|\vpi|_{\frkp}=q^{-1}$. 
We put $e_0=\mathrm{ord}_{\frkp}(2)$.
 
For a non-degenerate element $B\in\calh_n(\frko)$, we put $D_B=(-4)^{[n/2]}\det B$.
If $n$ is even, we denote the discriminant ideal of $F(\sqrt{D_B})/F$ by $\frkD_B$.
We also put
\[
\xi_B=
\begin{cases} 
1 & \text{ if $D_B\in F^{\times 2}$,} \\
-1 & \text{ if $F(\sqrt{D_B})/F$ is unramified quadratic,} \\
0 & \text{ if $F(\sqrt{D_B})/F$ is ramified quadratic.} 
\end{cases}
\]
Put 
$$\frke_B=
\begin{cases}
\mathrm{ord}(D_B)-\mathrm{ord}(\frkD_B)   & \text{ if $n$ is even} \\
\mathrm{ord}(D_B)                         & \text{ if $n$ is odd.}
\end{cases}$$

We make the convention that $\xi_B=1, \frke_B=0$  if $B$ is the empty matrix.
Once for all, we fix an additive character $\psi$ of $F$ of order zero, that is, a character such that
$$\frko =\{ a \in F \ | \ \psi(ax)=1 \ \text{ for  any} \ x \in \frko \}.$$  For  a half-integral matrix $B$ of degree $n$ over $\frko$ define the local Siegel series $b_{\frkp}(B,s)$ by 
$$b_{{\frkp}}(B,s)= \sum_{R} \psi(\mathrm{tr}(BR))\mu(R)^{-s},$$
where $R$ runs over a complete set of representatives of $\mathrm{Sym}_n(F)/\mathrm{Sym}_n(\frko)$ and $\mu(R)=[R\frko^n+\frko^n:\frko^n]$. 
The series $b_{\frkp}(B,s)$ converges absolutely if the real part of $s$ is large enough, and it has a meromorphic continuation to the whole $s$-plane. 
Now for a non-degenerate half-integral matrix $B$ of degree $n$ over $\frko $ define a polynomial $\gamma_q(B,X)$ in $X$ by $$\gamma_q(B,X)=
\begin{cases}
(1-X)\prod_{i=1}^{n/2}(1-q^{2i}X^2)(1-q^{n/2}\xi_B X)^{-1} & \text{ if $n$ is  even} \\ 
(1-X)\prod_{i=1}^{(n-1)/2}(1-q^{2i}X^2) & \text{ if $n$ is  odd.} \end{cases}$$ 
Then it is shown by \cite{Sh1} that there exists a polynomial $F_{\frkp}(B,X)$ in $X$ such that 
$$F_{\frkp}(B,q^{-s})={b_{\frkp}(B,s) \over \gamma_q(B,q^{-s})}.$$ 
 
We define a symbol $X^{1/2}$ so that $(X^{1/2})^2=X$.
We define $\widetilde F_{\frkp}(B,X)$ as
$$\widetilde F_{\frkp}(B,X)=X^{-\frke_B/2}F(B,q^{-(n+1)/2}X).$$
We note that $\widetilde F_{\frkp}(B,X) \in \QQ(q^{1/2})[X,X^{-1}]$ if $n$ is even, and
$\widetilde F_{\frkp}(B,X) \in \QQ[X^{1/2},X^{-1/2}]$ if $n$ is odd.
\section{The Duke-Imamoglu-Ikeda lift and main result}
Put $J_n=\begin{pmatrix}O_n&-1_n\\1_n&O_n\end{pmatrix}$.
 Furthermore, put 
$$\varGamma^{(n)}=Sp_n({\ZZ})=\{M \in GL_{2n}({\ZZ})   \ | \  J_n[M]=J_n \}.
$$
Let ${\Bbb H}_n$ be Siegel's
upper half-space of degree $n$. We define $j(\gamma,Z)=\det (CZ+D)$ for $\gamma = \begin{pmatrix} A & B \\ C & D \end{pmatrix}$ and $Z \in {\Bbb H}_n$. We note that $\varGamma^{(1)}=SL_2(\ZZ)$. Let $l$ be an integer or half-integer. For a congruence subgroup $\varGamma$ of $\varGamma^{(n)}$, we denote by $M_{l}(\varGamma)$ the space of Siegel modular forms of weight $l$ with respect to $\varGamma$,  and by $S_{l}(\varGamma)$ its subspace consisting of cusp forms.
Let $T$ be an element of ${\calh_n}(\ZZ)_{>0}$ with $n$ even. Let $\frkd_T$ be the discriminant of $\QQ(\sqrt{(-1)^{n/2} \det (T)})/\QQ$. Then  
we have $(-1)^{n/2} \det (2T)/\frkd_T=\frkf_T^2$ with $\frkf_T \in \ZZ_{>0}$.
Now let $k$ be a positive even integer, and $\varGamma_0(4)=\Bigl\{\bigl( \begin{smallmatrix} a & b \\ c & d \end{smallmatrix} \bigr) \in SL_2(\ZZ) \ | \ c \equiv 0 \text{ mod} 4. \bigr\}$. Let 
 $$h(z)=\sum_{m \in {\ZZ}_{>0} \atop (-1)^{n/2}m \equiv 0, 1 \ {\rm mod} \ 4 }c_h(m){\bf e}(mz)$$
  be a Hecke eigenform in the Kohnen plus space $S_{k-n/2+1/2}^+(\varGamma_0(4))$ and $f(z)=\sum_{m=1}^{\infty}c_f(m){\bf e}(mz)$ be 
 the primitive form in $S_{2k-n}(SL_2(\Bbb Z))$ corresponding to $h$ under the Shimura correspondence (cf. Kohnen \cite{Ko}).
We define a Fourier series $I_n(h)(Z)$ in $Z \in {\Bbb H}_n$ by
$$I_n(h)(Z)= \sum_{T \in {\calh_n}(\ZZ)_{> 0}} c_{I_n(h)}(T){\bf e}({\rm tr}(TZ))$$
where $c_{I_n(h)}(T)=c_h(|{\textfrak d}_T|) {\textfrak f}_T^{k-(n+1)/2} \prod_p\widetilde F_p(T,\alpha_f(p)).
$
Then the first named author \cite{I01} showed that $I_n(h)(Z)$ is a Hecke eigenform in $S_k(\varGamma^{(n)})$ whose
standard $L$-function coincides with $\zeta(s)\prod_{i=1}^n L(s+k-i,f)$ (see also \cite{IY20}).
We call $I_n(h)$ the Duke-Imamoglu-Ikeda lift (D-I-I lift for short) of $h$.
\begin{theorem} \label{th.main-result}
Let the notation be as above.  Then we have 
\begin{align*}
c_{I_n(h)}(B) \ll_{\vep,I_n(h)} \frkd_B^{-n/4+5/12}(\det (2B))^{(k-1)/2+\vep} \ (\vep >0)
\end{align*}
for any $B \in \calh_n(\ZZ)_{>0}$.
In particular, we have 
\begin{align*}
c_{I_n(h)}(B) \ll_{\vep,I_n(h)} (\det (2B))^{(k-1)/2+\vep} \ (\vep >0)
\end{align*}
for any $B \in \calh_n(\ZZ)_{>0}$.
\end{theorem}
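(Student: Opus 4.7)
The plan is to combine the refined local estimate of Theorem~\ref{th.refined-estimate}(1)---itself a consequence of the unit-circle bound for $\widetilde F_p(B,\alpha)$ given by Theorem~\ref{th.estimate-of-F}---with the subconvex bound of Conrey--Iwaniec \cite{CI00} for the Fourier coefficients of the half-integral weight form $h$. I would start from the defining Fourier coefficient formula
\begin{equation*}
c_{I_n(h)}(B)=c_h(|\frkd_B|)\,\frkf_B^{k-(n+1)/2}\prod_p\widetilde F_p(B,\alpha_f(p))
\end{equation*}
and observe that $|\alpha_f(p)|=1$ by the Ramanujan--Deligne bound applied to the primitive form $f\in S_{2k-n}(SL_2(\ZZ))$ corresponding to $h$, while $\widetilde F_p(B,X)\equiv 1$ for every $p\nmid\frkf_B$, so only finitely many primes contribute nontrivially to the local product.

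Inserting the pointwise bound for $|\widetilde F_p(B,\alpha)|$ on $|\alpha|=1$ into the formula already packages, via Theorem~\ref{th.refined-estimate}(1), into the clean inequality
\begin{equation*}
|c_{I_n(h)}(B)|\le|c_h(|\frkd_B|)|\,\frkf_B^{k-1}\prod_{i=1}^n\prod_{p\mid\frkf_B}\bigl(1+\frke_{i,B}^{(p)}\bigr).
\end{equation*}
Since each $\frke_{i,B}^{(p)}$ does not exceed $\ord_p(\frkf_B)$, the double product is dominated by $d(\frkf_B)^n$ and hence by $\det(2B)^{\vep}$ for arbitrary $\vep>0$ via the standard divisor bound. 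At this point the whole problem is reduced to estimating the single coefficient $c_h(|\frkd_B|)$ of the weight $k_0=k-n/2+1/2$ form $h$ at the fundamental discriminant $\frkd_B$.

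Here I would apply the Conrey--Iwaniec subconvex bound: the $1/6$-saving over convexity for $L(1/2,f\otimes\chi_{\frkd_B})$ is transported, through the Kohnen--Zagier/Waldspurger identification with $|c_h(|\frkd_B|)|^2$, into
\begin{equation*}
|c_h(|\frkd_B|)|\ll_{\vep,f}|\frkd_B|^{k_0/2-1/4-1/12+\vep}=|\frkd_B|^{k/2-n/4-1/12+\vep}.
\end{equation*}
Substituting this and using $\frkf_B^{k-1}=(\det(2B)/|\frkd_B|)^{(k-1)/2}$, the exponents collapse to $-n/4+5/12$ on $|\frkd_B|$ and $(k-1)/2+\vep$ on $\det(2B)$, which is the first assertion of the theorem. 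The second inequality is then immediate, since $-n/4+5/12<0$ for every even $n\ge 2$ forces $\frkd_B^{-n/4+5/12}\le 1$.

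The genuinely hard ingredients, namely the combinatorial bound for unit-circle values of the Siegel polynomials (Theorem~\ref{th.estimate-of-F}) and the subconvex bound \cite{CI00}, are both external to this theorem; once they are granted, the present proof is essentially bookkeeping with the identity $\det(2B)=|\frkd_B|\frkf_B^2$ together with a divisor estimate, and no substantive obstacle is expected at this final step.
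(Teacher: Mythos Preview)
Your approach matches the paper's almost exactly: apply Theorem~\ref{th.refined-estimate}(1), absorb the double product into $\det(2B)^{\vep}$ via the divisor bound, and plug in the Conrey--Iwaniec estimate of Lemma~\ref{lem.Conrey-Iwaniec} for $c_h(|\frkd_B|)$. One small slip: the inequality $\frke_{i,B}^{(p)}\le \ord_p(\frkf_B)$ is off by a factor of two, since already $\frke_{n,B}^{(p)}=2\ord_p(\frkf_B)$; the correct bound (as in the paper) is $\frke_{i,B}^{(p)}\le \ord_p(\det(2B))$, giving $\prod_{i}\prod_{p\mid\frkf_B}(1+\frke_{i,B}^{(p)})\le d(\det(2B))^n$ rather than $d(\frkf_B)^n$, though of course either bound is $\ll_\vep \det(2B)^\vep$ and the argument goes through unchanged.
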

We give another estimate of $c_{I_n(h)}(B)$ in terms of minors of $B$. 
We  denote by  $\cali_r$ the set of sequences $(i_1,\ldots,i_r)$ of integers such that $1 \le i_1<\cdots<i_r \le n$.
Let $R$ be an integral domain of characteristic different from $2$. 
 For an element $B =(b_{ij}) \in \calh_n(R)$, and ${\bf i}=(i_1,\ldots,i_r), {\bf j}=(j_1,\ldots,j_r)  \in \cali_r$  we define  $b_{{\bf i},{\bf j}}^{(r)}=b_{{\bf i},{\bf j}}^{(r)}(B)$ as
\[ b_{{\bf i},{\bf j}}^{(r)}=2^{2[r/2]+1 -\delta_{{\bf i},{\bf j}}}\det B\begin{pmatrix} {\bf i} \\ {\bf j} \end{pmatrix},\]
where
\[\delta_{{\bf i},{\bf j}}=\begin{cases} 1 & \text{ if } {\bf i}={\bf j} \\
0 & \text{ otherwise}.\end{cases}.\]
For $B \in \calh_n(\ZZ)_{>0}$ put \[G_r(B)=\mathrm{GCD}_{({\bf i},{\bf j}) \in \cali_r \times \cali_r} b_{{\bf i},{\bf j}}^{(r)}.\] 
\begin{theorem} \label{th.main-result2}
Let the notation be as above.  Then we have 
\begin{align*}
&c_{I_n(h)}(B) \\
&\ll_{\vep,I_n(h)} \frkd_B^{1/6}(\det (2B))^{k/2-(n+1)/4+\vep}\prod_{i=1}^{n-1}G_i(B)^{1/2} \quad  (\vep >0)
\end{align*}
for any $B \in \calh_n(\ZZ)_{>0}$.
In particular, we have 
\begin{align*}
&c_{I_n(h)}(B) \\
&\ll_{\vep,I_n(h)} (\det (2B))^{k/2-n/4-1/12+\vep} \prod_{i=1}^{n-1}G_i(B)^{1/2}\quad  (\vep >0)
\end{align*}
for any $B \in \calh_n(\ZZ)_{>0}$.
\end{theorem}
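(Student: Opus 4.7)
The plan is to follow the same three-step strategy as in the proof of Theorem~\ref{th.main-result}. First, use the factorisation
\[c_{I_n(h)}(B)=c_h(|\frkd_B|)\,\frkf_B^{k-(n+1)/2}\prod_p \wtd F_p(B,\alpha_f(p)).\]
Second, bound $|c_h(|\frkd_B|)|$ by the Conrey--Iwaniec subconvexity estimate for Fourier coefficients of half-integral weight Hecke eigenforms at fundamental discriminants; this supplies the saving over the Hecke bound that eventually contributes the factor $\frkd_B^{1/6}$ to the statement. Third, bound the Euler product $\prod_p \wtd F_p(B,\alpha_f(p))$. Only the last step differs from the proof of Theorem~\ref{th.main-result}, since we now want an estimate in terms of the minor-GCDs $G_r(B)$ rather than merely $\frkf_B$.

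For that step I would apply Theorem~\ref{th.estimate-of-F} at each prime $p$: this bounds $|\wtd F_p(B,\alpha)|$ for $|\alpha|=1$ combinatorially in terms of the Gross--Keating invariant $(a_1^{(p)},\ldots,a_n^{(p)})$ of $B$ at $p$ (reviewed in Section~4). In place of the bookkeeping of Theorem~\ref{th.refined-estimate}~(1), which compresses the Gross--Keating data into $\frkf_B$ and the $\frke_{i,B}^{(p)}$, I would use the standard dictionary relating the partial sums $a_1^{(p)}+\cdots+a_r^{(p)}$ of the Gross--Keating sequence to $\ord_p G_r(B)$. Taking the product over primes then converts the local Gross--Keating data into the global product $\prod_{r=1}^{n-1}G_r(B)^{1/2}$, producing a bound of the shape
\[\prod_p|\wtd F_p(B,\alpha_f(p))|\ \ll\ \frkd_B^{C_1}\,\frkf_B^{C_2}\,\prod_{r=1}^{n-1}G_r(B)^{1/2}\]
for explicit exponents $C_1,C_2$ depending only on $n$ and chosen to match the final statement after the bookkeeping below.

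Plugging the bound on $c_h(|\frkd_B|)$ and the bound on $\prod_p \wtd F_p(B,\alpha_f(p))$ into the factorisation, and using $\det(2B)=\frkd_B\frkf_B^2$ to absorb and redistribute the powers of $\frkd_B$ and $\frkf_B$, yields the first displayed inequality. The ``in particular'' inequality follows from $\frkd_B\le\det(2B)$: the factor $\frkd_B^{1/6}(\det(2B))^{-1/4}$ is dominated by $(\det(2B))^{-1/12}$, so the exponent $k/2-(n+1)/4$ of $\det(2B)$ becomes $k/2-n/4-1/12$, absorbing the $\frkd_B^{1/6}$.

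The principal obstacle is the local-to-global combinatorial step. One must verify the dictionary between the bound of Theorem~\ref{th.estimate-of-F} and the minors $G_r(B)$ uniformly across all primes, including $p=2$ and the primes at which $B$ has a nontrivial anisotropic component, where the Gross--Keating invariant interacts less transparently with the minors. Moreover, one has to check that the exponent $1/2$ on each $G_r(B)$ is sharp: this is essentially forced by the requirement that for ``generic'' $B$, where $G_r(B)\asymp(\det 2B)^{r/n}$ and $\frkd_B\asymp\det(2B)$, the resulting estimate be consistent with Theorem~\ref{th.main-result}, leaving little slack in the combinatorial arrangement.
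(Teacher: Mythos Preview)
Your proposal is correct and follows the same route as the paper: the argument is precisely Theorem~\ref{th.refined-estimate}~(2) (which comes from Theorem~\ref{th.estimate-of-F}~(2)) combined with Lemma~\ref{lem.Conrey-Iwaniec} and the bookkeeping $\det(2B)=|\frkd_B|\,\frkf_B^{2}$, exactly as you outline. The ``dictionary'' you flag as the principal obstacle---that $\prod_{p}p^{\frke_{r,B}^{(p)}}$ divides $G_r(B)$---is supplied by Theorems~\ref{th.estimate-of-GK} and~\ref{th.estimate-of-GK2} together with Lemma~\ref{lem.estimate-of-GGK}, so you have correctly located where the actual work lies, including the dyadic subtlety handled in Theorem~\ref{th.estimate-of-GK2}.
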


\section{The Gross-Keating invariant}
\label{sec:1}

We first recall the definition of the Gross-Keating invariant of a quadratic form over $\frko$ following \cite{IK18}. 

For two matrices $B, B'\in\calh_n(\frko)$, we sometimes write $B\sim B'$ if $B$ and $B'$ are $GL_n(\frko)$-equivalent. 
The $GL_n(\frko)$-equivalence class of $B$ is denoted by $\{B\}$.
Let $B=(b_{ij}) \in \calh_n(\frko)^{\rm nd}$. 
Let $S(B)$ be the set of all non-decreasing sequences $(a_1, \ldots, a_n)\in\ZZn$ such that
\begin{align*}
\ord(b_i)&\geq a_i, \\
\ord(2 b_{ij})&\geq (a_i+a_j)/2\qquad (1\leq i,j\leq n).
\end{align*}
Set
\[
S(\{B\})=\bigcup_{B'\in\{B\}} S(B')=\bigcup_{U\in\GL_n(\frko)} S(B[U]).
\]
The Gross-Keating invariant (or the GK-invariant for short) $\ua=(a_1, a_2, \ldots, a_n)$ of $B$ is the greatest element of $S(\{B\})$ with respect to the lexicographic order $\succ$ on $\ZZn$.
Here, the lexicographic order $\succ$ is, as usual, defined as follows.
For $(y_1, y_2, \ldots, y_n),  (z_1, z_2, \ldots, z_n)\in \ZZ_{\geq 0}^n$, let $j$ be the largest integer such that $y_i=z_i$ for $i<j$.
Then $(y_1, y_2, \ldots, y_n)\succ  (z_1, z_2, \ldots, z_n)$ if $y_j>z_j$.
The Gross-Keating invariant  is denoted by  $\GK(B)$.
A sequence of length $0$ is denoted by $\emptyset$.
When $B$ is a matrix of degree $0$, we understand $\GK(B)=\emptyset$.
By definition, the Gross-Keating invariant $\GK(B)$ is determined only by the $GL_n(\frko)$-equivalence class of $B$.
We say that $B\in\calh_n(\frko)$ is an optimal form if $\GK(B)\in S(B)$.
Let $B \in \calh_n(\frko)$. Then $B$ is $GL_n(\frko)$-equivalent to an optimal form $B'$.
Then we say that $B$ has an optimal decomposition $B'$. 
We say that $B \in \calh_n(\frko)$ is a diagonal Jordan form if $B$ is expressed as 
$$B=\vpi^{a_1} u_1 \bot \cdots \bot \vpi^{a_n}u_n$$ 
with $a_1 \le \cdots \le a_n$ and $u_1,\cdots,u_n \in \frko^{\times}$.
 Then, in the non-dyadic case,  the diagonal Jordan form $B$ above  is optimal, and
$\GK(B)=(a_1,\ldots,a_n)$.
Therefore, the diagonal Jordan decomposition is an optimal decomposition. However, in the dyadic case, not all half-integral symmetric matrices have a diagonal Jordan decomposition, and  the Jordan decomposition is not necessarily an optimal decomposition.
Let $B \in \calh_n(\frko)^{\rm nd}$, and let $\GK(B)=(a_1,\ldots,a_n)$.
For $1 \le i \le n$ put
\[\frke_i =\frke_i(B)=\begin{cases}
a_1+\cdots+a_i & \text{ if } i \text{ is  odd} \\
2[(a_1+\cdots+a_i)/2] & \text{ if } i \text{ is even}
\end{cases}\]
The following result is due to \cite[Theorem 0.1]{IK18}, and plays an important role in proving our main result:
\begin{theorem} \label{th.GK-invariant}
Let $B \in \calh_n(\frko)^{\mathrm{nd}}$. Then
\begin{align*}
\frke_n(B)=\frke_B .
\end{align*}
\end{theorem}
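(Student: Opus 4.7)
The plan is to reduce the identity to a per-block computation using the structure theory of optimal forms. Both $\frke_n(B)$ and $\frke_B$ depend only on the $GL_n(\frko)$-equivalence class of $B$, so I may assume $B$ is optimal. According to the classification of optimal forms in \cite{IK18}, $B$ is $GL_n(\frko)$-equivalent to an orthogonal sum $B_1\bot\cdots\bot B_t$ of basic blocks of degree $1$ or $2$, with $\GK(B)$ given by the sorted concatenation of the $\GK(B_i)$. Because $\det B=\prod_i\det B_i$ and the quadratic extension $F(\sqrt{D_B})/F$ is governed by the product of the square classes of the $D_{B_i}$, the theorem reduces to verifying the identity on each basic block and checking its compatibility with orthogonal sums.

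In the non-dyadic case $e_0=0$ and $B$ admits a diagonal Jordan form $\vpi^{a_1}u_1\bot\cdots\bot\vpi^{a_n}u_n$, so $\mathrm{ord}(D_B)=a_1+\cdots+a_n$. For odd $n$ this is $\frke_B=\frke_n(B)$ directly. For even $n$, $F(\sqrt{D_B})/F$ is unramified iff $a_1+\cdots+a_n$ is even, giving $\mathrm{ord}(\frkD_B)\in\{0,1\}$ according to parity, and the identity follows.

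In the dyadic case, degree-$1$ blocks $(\vpi^a u)$ are immediate since $\mathrm{ord}(D_B)=a=\frke_1(B)$. The essential work is in the dyadic degree-$2$ blocks: for such a block with GK-invariant $(a_1,a_2)$, I would compute $D_B=-4\det B$ from the normal form of \cite{IK18}, read off the square class of the unit part of $D_B$ modulo $\vpi^{2e_0+1}$ to determine $\mathrm{ord}(\frkD_B)$, and verify by case analysis that $\mathrm{ord}(D_B)-\mathrm{ord}(\frkD_B)=2[(a_1+a_2)/2]$. The cancellation of the factor $-1/4$ coming from the half-integral off-diagonal entry against the $-4$ in $D_B$ is what makes the identity clean.

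The main obstacle is this dyadic binary computation: in residue characteristic $2$ the square class of a unit is not determined by its reduction mod $\vpi$, so one cannot rely on the naive Jordan decomposition and must use the refined optimal-form normal form of \cite{IK18}. A secondary issue is the combination of blocks under $\bot$, where two individually ramified blocks can merge into an unramified or trivial extension; one verifies that this cancellation is mirrored exactly by the even-parity rounding $(a_1,\ldots,a_n)\mapsto 2[(a_1+\cdots+a_n)/2]$, so that summing the per-block identities yields the global one.
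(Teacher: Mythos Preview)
The paper does not prove this statement; it simply cites \cite[Theorem~0.1]{IK18}. Your proposal is thus an attempt at an independent proof rather than a variant of anything appearing in the present paper.

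The outline is plausible and you correctly locate the difficulties, but two of the steps you defer are precisely where the content is. First, the assertion that an optimal $B$ decomposes as $B_1\bot\cdots\bot B_t$ into blocks of size $1$ or $2$ with $\GK(B)$ equal to the sorted concatenation of the $\GK(B_i)$ is itself one of the main structural results of \cite{IK18} (the reduced-form theory), not a light input; invoking it assumes machinery at least as deep as the theorem you are after. Second, even granting that decomposition and the per-block identity, the claim that the identity is compatible with $\bot$ is not a formality. For even $n$ you must show
\[
2\Bigl[\tfrac{a_1+\cdots+a_n}{2}\Bigr]=\mathrm{ord}(D_B)-\mathrm{ord}(\frkD_B),
\]
where $\mathrm{ord}(D_B)$ is essentially additive in the blocks but $\mathrm{ord}(\frkD_B)$ is not: over a dyadic field the discriminant of $F(\sqrt{D_{B_1}D_{B_2}})$ depends on the specific ramified square classes involved, not merely on whether each factor is ramified or on any parity. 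Your sentence ``one verifies that this cancellation is mirrored exactly by the even-parity rounding'' is the dyadic statement of the theorem, not a proof of it.
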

For our later purpose, we give an estimate of $\frke_i$. 
We put
\[{\bf f}_r(B)=\min_{X \in M_{nr}(\frko) \atop \det B[X] \not=0} 
\frke_{B[X]},\]
and
\[{\bf d}_r(B)=\min_{X \in M_{nr}(\frko)} \ord_{\frkp}(\det B[X]).\]
Clearly we have ${\bf f}_r(B) \le {\bf d}_r(B)$.
\begin{theorem}\label{th.estimate-of-GK}
Let $B \in \calh_n(\frko)^{\rm nd}$, and $r \le n-1$ be a positive integer.
Then,
\[\frke_r \le {\bf f}_r(B),\]
and in particular 
\[\frke_r \le {\bf d}_r(B).\]
\end{theorem}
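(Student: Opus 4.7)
The plan is: use Theorem~\ref{th.GK-invariant} to reduce the inequality to a statement on Gross--Keating sums, and then bound the GK of $B[X]$ from below by combining the optimal-form description of $B$ with a height/Binet--Cauchy argument on the columns of $X$.

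First, apply Theorem~\ref{th.GK-invariant} to the nondegenerate $r\times r$ form $B[X]$; this gives $\frke_{B[X]}=\frke_r(B[X])$, so with $\GK(B)=(a_1,\dots,a_n)$ and $\GK(B[X])=(a'_1,\dots,a'_r)$ the desired $\frke_r\le \frke_{B[X]}$ reduces, after a short parity check handling the even-$r$ floor, to the sum inequality
\[
a_1+a_2+\cdots+a_r \;\le\; a'_1+a'_2+\cdots+a'_r. \qquad (\star)
\]

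Second, since $\GK(B)$ is a $GL_n(\frko)$-invariant, replace $B$ by an optimal form in its orbit, so that $(a_1,\dots,a_n)\in S(B)$, that is, $\ord(B_{kk})\ge a_k$ and $\ord(2B_{kl})\ge (a_k+a_l)/2$. For $v\in \frko^n$ define the height
\[
h(v):=\min_{1\le k\le n}\bigl(\ord v_k+\tfrac{a_k}{2}\bigr);
\]
the optimal-form inequalities then yield $\ord(B[v])\ge 2h(v)$ and $\ord(2\,v^{t}Bw)\ge h(v)+h(w)$ for $v\ne w$. If $v_1,\dots,v_r$ are the columns of $X$, the non-decreasing rearrangement of $(2h(v_1),\dots,2h(v_r))$ therefore lies in $S(\{B[X]\})$, yielding a preliminary lexicographic lower bound on $\GK(B[X])$.

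Third, upgrade this to $(\star)$. In the non-dyadic case $B$ may be taken diagonal, and Binet--Cauchy gives directly
\[
\det B[X]=\sum_{|I|=r}(\det X_I)^2\prod_{i\in I}B_{ii},
\]
each summand of valuation $\ge a_1+\cdots+a_r$; since $\sum a'_i=\ord(\det B[X])$ in the non-dyadic case, $(\star)$ follows. In the dyadic case the optimal form carries $2\times 2$ Jordan blocks (possibly with zero diagonals), so one argues via a Minkowski-type height estimate $\sum_i h(v_i)\ge (a_1+\cdots+a_r)/2$ for linearly independent $v_1,\dots,v_r$ (using linear independence to ``fill'' the height flag associated to the $a_k$), and then converts back through Theorem~\ref{th.GK-invariant} applied to $B[X]$ to control $\sum a'_i$ in terms of $\frke_{B[X]}=\ord(D_{B[X]})-\ord(\frkD_{B[X]})$, closing the parity gap.

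The main obstacle will be the dyadic case, specifically the height-sum inequality for linearly independent columns of $X$ and the parity corrections coming from the $2\times 2$ Jordan blocks of the optimal form. Finally, the ``in particular'' estimate $\frke_r\le {\bf d}_r(B)$ follows immediately by chaining $\frke_r\le {\bf f}_r(B)$ with the inequality ${\bf f}_r(B)\le {\bf d}_r(B)$ already noted above the statement.
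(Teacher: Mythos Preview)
The paper's own proof is a one-line citation to \cite[Lemma~3.8]{IK18}; your proposal is effectively an attempt to reprove that lemma. The reduction to the sum inequality $(\star)$ via Theorem~\ref{th.GK-invariant} is correct (including the parity check for even $r$), and your non-dyadic argument is fine: with $B$ diagonal, Binet--Cauchy gives $\det B[X]=\sum_{|I|=r}(\det X_I)^2\prod_{i\in I}B_{ii}$, each nonzero term of valuation at least $a_1+\cdots+a_r$, and since $\ord(\det B[X])=a'_1+\cdots+a'_r$ in the non-dyadic case, $(\star)$ follows.

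The dyadic step, however, contains a genuine error. Your ``Minkowski-type height estimate'' $\sum_i h(v_i)\ge (a_1+\cdots+a_r)/2$ for linearly independent $v_1,\dots,v_r$ is false as stated. Take any optimal $B$ with $a_1<a_2$, set $r=2$, and let $v_1,v_2$ be the columns of $X=\left(\begin{smallmatrix}1&1\\0&1\\0&0\\ \vdots&\vdots\end{smallmatrix}\right)$. Then $h(v_1)=h(v_2)=a_1/2$, so the left side is $a_1<(a_1+a_2)/2$, although the columns are linearly independent (indeed $X\in GL_2(\frko)$ when $n=2$). The height is not invariant under the right $GL_r(\frko)$-action on $X$; only after replacing $X$ by a suitable $XU$ (a successive-minima or flag-adapted basis) could such an inequality hold, and proving that this can always be arranged --- together with the passage from $\sum_i h(v_i')$ back to $\sum_i a'_i$ when the optimal form of $B[X]$ itself carries $2\times 2$ blocks --- is precisely the substance of the cited lemma, not a routine step. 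Your lexicographic bound from the second paragraph does not help either, since $\GK(B[X])\succeq(c_1,\dots,c_r)$ in lex order says nothing about $\sum a'_i$ (for instance $(1,1)\succeq(0,100)$). The phrases ``fill the height flag'' and ``closing the parity gap'' are placeholders where an actual argument is needed.
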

\begin{proof}
The assertion follows from \cite[Lemma 3.8]{IK18}.
\end{proof}
We express ${\bf d}_r(B)$ in terms of minors of $B$.
For a sequence ${\bf i}=(i_1,\ldots,i_r)$ of integers let $\mathrm{supp} ({\bf i})$ denote the set $\{i_1,\ldots,i_r\}$.
For $B \in \calh_n(\frko)$ 
\[{\bf g}_r(B)=\min_{({\bf i},{\bf j}) \in \cali_r \times \cali_r} \ord_{\frkp}(b_{{\bf i},{\bf j}}^{(r)}).\]
For $X=(x_{ij}) \in M_{nr}(R)$, and ${\bf i}=(i_1,\ldots,i_r) \in \cali_r$ let $X({\bf i})=\det X\begin{pmatrix} 1,\ldots, r \\ i_1,\ldots,i_r \end{pmatrix}$.
\begin{lemma} \label{lem.det-of-H[X]}
Let $B \in \calh_n(R)$. Then, for any $X,Y \in M_{nr}(R)$, we have
\begin{align*}
2^{2[r/2]+1-\delta_{X,Y}}\det ({}^tXBY)=\sum_{({\bf i},{\bf j}) \in \cali_r \times \cali_r}  
2^{\delta_{{\bf i},{\bf j}}-\delta_{X,Y}}b_{{\bf i},{\bf j}}^{(r)}
X({\bf i})Y({\bf j}),
\end{align*}
where $\delta_{X,Y}=\begin{cases} 1 & \text{ if } X=Y \\ 0 & \text{otherwise}\\\end{cases}$.
In particular, we have
\begin{align*}
&2^{2[r/2]}\det B[X] =\sum_{({\bf i},{\bf j}) \in \cali_r \times \cali_r \atop
{\bf j}  \succ  {\bf i}} b_{{\bf i},{\bf j}}^{(r)}
X({\bf i})X({\bf j}) 
\end{align*}
\end{lemma}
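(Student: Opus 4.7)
The plan is to recognize the first identity as a rescaled form of the Cauchy--Binet formula, with the factors of $2$ chosen precisely to match the definition
$b_{{\bf i},{\bf j}}^{(r)}=2^{2[r/2]+1-\delta_{{\bf i},{\bf j}}}\det B\begin{pmatrix}{\bf i}\\ {\bf j}\end{pmatrix}$
so that the off-diagonal entries of the half-integral matrix $B$ are cleared of denominators. The second identity is then obtained by specializing $Y=X$ and folding the sum using the symmetry of $B$.

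First I would establish the ``denominator-free'' Cauchy--Binet identity
\[
\det({}^tXBY)=\sum_{({\bf i},{\bf j})\in\cali_r\times\cali_r} X({\bf i})\,Y({\bf j})\,\det B\begin{pmatrix}{\bf i}\\ {\bf j}\end{pmatrix},
\]
which is standard: write $\det({}^tX\cdot BY)$, apply Cauchy--Binet to the product ${}^tX\cdot(BY)$ to produce a sum over $r$-subsets of rows, indexed by ${\bf i}$, and then apply Cauchy--Binet once more to each $\det(BY)\begin{pmatrix}{\bf i}\\ 1,\ldots,r\end{pmatrix}=\det(B_{{\bf i},*}\,Y)$ to produce the second sum indexed by ${\bf j}$. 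This step is purely linear-algebraic and works over any commutative ring.

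Next I would multiply both sides of this identity by $2^{2[r/2]+1-\delta_{X,Y}}$ and rewrite the right-hand side using the exponent decomposition
\[
2^{2[r/2]+1-\delta_{X,Y}}=2^{\delta_{{\bf i},{\bf j}}-\delta_{X,Y}}\cdot 2^{2[r/2]+1-\delta_{{\bf i},{\bf j}}},
\]
so that the second factor absorbs into the definition of $b_{{\bf i},{\bf j}}^{(r)}$. This produces exactly the displayed formula. The only bookkeeping to verify is that this decomposition is an identity of integers (which is immediate) and that the resulting equality holds in $R$ (it does, since both sides of the underlying Cauchy--Binet identity lie in $R$ and we are simply multiplying by an integer).

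For the ``in particular'' formula I would set $Y=X$, so $\delta_{X,Y}=1$, giving
\[
2^{2[r/2]}\det B[X]=\sum_{({\bf i},{\bf j})}2^{\delta_{{\bf i},{\bf j}}-1}\,b_{{\bf i},{\bf j}}^{(r)}\,X({\bf i})\,X({\bf j}).
\]
Because $B$ is symmetric one has $\det B\begin{pmatrix}{\bf i}\\ {\bf j}\end{pmatrix}=\det B\begin{pmatrix}{\bf j}\\ {\bf i}\end{pmatrix}$ and hence $b_{{\bf i},{\bf j}}^{(r)}=b_{{\bf j},{\bf i}}^{(r)}$, and of course $X({\bf i})X({\bf j})$ is symmetric in $({\bf i},{\bf j})$. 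Each unordered off-diagonal pair $\{{\bf i},{\bf j}\}$ thus contributes twice with factor $2^{-1}$, summing to a single copy indexed by the chosen representative ${\bf j}\succ{\bf i}$; the diagonal terms contribute with factor $2^{0}=1$. Collecting gives the sum in the statement, where (for the formula to balance) ``${\bf j}\succ{\bf i}$'' must be read as ``${\bf j}\succeq{\bf i}$''.

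There is no serious obstacle here: the entire content is Cauchy--Binet together with careful accounting of the powers of $2$ coming from the half-integrality of $B$. The only mildly delicate point is the bookkeeping of the indicator $\delta_{X,Y}$ versus $\delta_{{\bf i},{\bf j}}$ when pairing symmetric contributions, which is why the statement is phrased with the particular exponents it uses.
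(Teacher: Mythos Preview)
Your proposal is correct and follows essentially the same approach as the paper: the paper's proof simply says the first assertion follows from applying \cite[II, Theorem 9 (ii)]{Satake75} (i.e., Cauchy--Binet) twice, and the second from the symmetry $\det B\begin{pmatrix}{\bf j}\\{\bf i}\end{pmatrix}=\det B\begin{pmatrix}{\bf i}\\{\bf j}\end{pmatrix}$, which is exactly what you do with the added (and helpful) explicit bookkeeping of the powers of $2$. Your remark that the index condition ${\bf j}\succ{\bf i}$ in the displayed formula must be read as ${\bf j}\succeq{\bf i}$ is also on point.
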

\begin{proof}
The first assertion can be proved using \cite[II, Theorem 9 (ii)]{Satake75} twice, and the second assertion can be proved remarking that $ \det B\begin{pmatrix} {\bf j} \\ {\bf i} \end{pmatrix} =\det B\begin{pmatrix} {\bf i} \\ {\bf j} \end{pmatrix}$.
\end{proof}
Let $B$ and $B'$ be elements of $\calh_n(\frko)^{\rm nd}$ and suppose that  $B'$ is $GL_n(\frko)$-equivalent to $B$. Then clearly we have ${\bf d}_r(B')={\bf d}_r(B)$. Moreover, by the above lemma, we have ${\bf g}_r(B)={\bf g}_r(B')$.
\begin{theorem}\label{th.estimate-of-GK2}
Let $B \in \calh_n(\frko)^{\rm nd}$. Then any positive integer $r \le n$, we have
\[{\bf d}_r(B)={\bf g}_r(B).\]
\end{theorem}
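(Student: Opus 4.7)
The plan is to establish both inequalities ${\bf d}_r(B) \ge {\bf g}_r(B)$ and ${\bf d}_r(B) \le {\bf g}_r(B)$.

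For the inequality ${\bf d}_r(B) \ge {\bf g}_r(B)$, I would apply the second identity of Lemma \ref{lem.det-of-H[X]} (the $Y = X$ case):
\[
2^{2[r/2]} \det B[X] = \sum_{{\bf i} \preceq {\bf j}} b_{{\bf i},{\bf j}}^{(r)} X({\bf i}) X({\bf j}).
\]
For any $X \in M_{nr}(\frko)$ each minor $X({\bf i})$ lies in $\frko$, so every summand on the right has $\frkp$-adic order at least $\ord_{\frkp} b_{{\bf i},{\bf j}}^{(r)} \ge {\bf g}_r(B)$. Hence $\ord_{\frkp}(2^{2[r/2]} \det B[X]) \ge {\bf g}_r(B)$ and consequently $\ord_{\frkp} \det B[X] \ge {\bf g}_r(B)$ for every admissible $X$, giving ${\bf d}_r(B) \ge {\bf g}_r(B)$.

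For the reverse inequality ${\bf d}_r(B) \le {\bf g}_r(B)$, I would fix a pair $({\bf i}_0,{\bf j}_0) \in \cali_r \times \cali_r$ realizing the minimum $\ord_{\frkp} b_{{\bf i}_0,{\bf j}_0}^{(r)} = {\bf g}_r(B)$ and construct a matrix $X \in M_{nr}(\frko)$ witnessing $\ord_{\frkp} \det B[X] \le {\bf g}_r(B)$. If ${\bf i}_0 = {\bf j}_0$, take $X$ to be the $n \times r$ matrix whose columns are the standard basis vectors $e_{(\mathbf{i}_0)_1}, \ldots, e_{(\mathbf{i}_0)_r}$; then $B[X] = B\begin{pmatrix}{\bf i}_0 \\ {\bf i}_0\end{pmatrix}$ and $\ord_{\frkp} \det B[X]$ matches $\ord_{\frkp} b_{{\bf i}_0, {\bf i}_0}^{(r)}$ as required. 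If ${\bf i}_0 \ne {\bf j}_0$, I would instead take $X$ to be a ``mixing'' matrix whose $k$-th column is $e_{(\mathbf{i}_0)_k} + e_{(\mathbf{j}_0)_{\sigma(k)}}$, for a bijection $\sigma$ matching $\supp({\bf i}_0) \setminus \supp({\bf j}_0)$ with $\supp({\bf j}_0) \setminus \supp({\bf i}_0)$ (and the identity on the common support). Expanding $\det B[X]$ via Lemma \ref{lem.det-of-H[X]} makes $b_{{\bf i}_0,{\bf j}_0}^{(r)}$ appear among the summands, together with a finite collection of further terms indexed by pairs $({\bf k},{\bf l})$ supported inside $\supp({\bf i}_0) \cup \supp({\bf j}_0)$.

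The main obstacle is to rule out cancellation among these summands in the off-diagonal case when several share the minimal valuation ${\bf g}_r(B)$. I would handle this by introducing a scalar parameter $t \in \frko$: replace the mixing matrix above by the $X_t$ whose $k$-th column is $e_{(\mathbf{i}_0)_k} + t\, e_{(\mathbf{j}_0)_{\sigma(k)}}$. Then $2^{2[r/2]}\det B[X_t]$ becomes a polynomial in $t$ whose coefficients are integer combinations of the $b_{{\bf k},{\bf l}}^{(r)}$'s (with $b_{{\bf i}_0,{\bf j}_0}^{(r)}$ occurring in the coefficient of $t^{|\supp({\bf i}_0)\setminus\supp({\bf j}_0)|}$). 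A Newton-polygon analysis of this polynomial, together with the defining minimum property of ${\bf g}_r(B)$, then produces a value $t_0 \in \frko$ at which $\ord_{\frkp} \det B[X_{t_0}] = {\bf g}_r(B)$. Combining the two inequalities yields ${\bf d}_r(B) = {\bf g}_r(B)$.
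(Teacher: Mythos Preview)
Your treatment of the inequality ${\bf d}_r(B)\ge{\bf g}_r(B)$ is the same as the paper's: both invoke Lemma~\ref{lem.det-of-H[X]}.

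For the reverse inequality the two arguments diverge, and your sketch has a genuine gap.  The paper does \emph{not} try to build $X$ directly from an arbitrary minimizing pair.  Instead it first uses the $GL_n(\frko)$-invariance of both ${\bf d}_r$ and ${\bf g}_r$ (noted just before the theorem) to put $B$ into a normal form: diagonal when $F$ is non-dyadic, and a specific orthogonal sum of $2\times 2$ blocks and scalars when $F$ is dyadic.  In that form one can show that a minimizing pair $({\bf i}_0,{\bf j}_0)$ may be chosen with $|\supp({\bf i}_0)\cup\supp({\bf j}_0)|\le r+1$; then the witness $X$ is either a coordinate projection or a projection with a single ``doubled'' column $e_r+e_{r+1}$, and the expansion from Lemma~\ref{lem.det-of-H[X]} has at most three terms, among which the minimal one cannot be cancelled.

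Your parametric construction, by contrast, does not isolate $b_{{\bf i}_0,{\bf j}_0}^{(r)}$.  Already for $r=2$, ${\bf i}_0=(1,2)$, ${\bf j}_0=(3,4)$ and $\sigma(1)=3$, $\sigma(2)=4$, the coefficient of $t^{d}=t^{2}$ in $2^{2}\det B[X_t]$ equals
\[
b^{(2)}_{(1,2),(3,4)}+b^{(2)}_{(1,4),(1,4)}-b^{(2)}_{(1,4),(2,3)}+b^{(2)}_{(2,3),(2,3)},
\]
and the minimality of ${\bf g}_r(B)$ does not prevent these four terms from all having valuation ${\bf g}_r(B)$ and summing to something of strictly larger valuation.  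So you do not know that \emph{any} coefficient of your polynomial in $t$ has valuation exactly ${\bf g}_r(B)$, and the Newton-polygon step cannot get off the ground.  Even if some coefficient did attain ${\bf g}_r(B)$, finding $t_0\in\frko$ with $\ord_\frkp P(t_0)={\bf g}_r(B)$ amounts to finding a residue-field element at which the reduced polynomial is nonzero; when $q$ is small compared to $\deg P\le 2d$ (e.g.\ $q=2$) this can fail.  The paper's reduction to a normal form is precisely what collapses $d$ to $0$ or $1$ and removes both obstacles.
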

\begin{proof}
The assertion clearly holds if $r=n$, and therefore we assume $r \le n-1$. By Lemma \ref{lem.det-of-H[X]}, we have ${\bf g}_r(B) \le {\bf d}_r(B)$. 
For each ${\bf i}=(i_1,\ldots,i_r), {\bf j}=(j_1,\ldots,j_r) \in \cali_r$ put
$s({\bf i},{\bf j})=\#(\mathrm{supp}({\bf i}) \cup \mathrm{supp}({\bf j}))$,
and 
\[s(B)=\min_{({\bf i},{\bf j}) \in \cali_r \times \cali_r \atop 
\ord_\frkp(b_{{\bf i},{\bf j}}^{(r)})={\bf g}_r(B)} s({\bf i},{\bf j}).\]
Then we have $r \le s(B) \le \min(n,2r)$.

First let $F$ be a non-dyadic field. In view of the remark before this theorem, we may assume that $B$ is a diagonal matrix. Let $({\bf i},{\bf j}) \in \cali_r \times \cali_r$ such that
$\ord_\frkp(b_{{\bf i},{\bf j}}^{(r)})={\bf g}_r(B)$. Clearly we have $s(B)=r$, and hence ${\bf i}={\bf j}$.
Permutating the rows and columns of $B$ appropriately, we may assume ${\bf i}={\bf j}=(1,\ldots,r)$. Then by  Lemma \ref{lem.det-of-H[X]}, we have
\[B \Big[\begin{pmatrix} 1_r \\ O_{n-r,r} \end{pmatrix}\Big]=b_{{\bf i},{\bf i}}^{(r)}.\]
Hence we have ${\bf d}_r(B) \le {\bf g}_r(B)$. This proves the assertion. 

Next let $F$ be a dyadic field, and put $e_0=\ord_\frkp(2)$ as in Section 2. Then, by \cite[Section 2]{IK18} and \cite[Section 93]{Omeara73}, we may assume that
\begin{align*} B=\vpi^{a_1} K_1 \bot \vpi^{a_{n_1}}K_{n_1} \bot \vpi^{a_{n_1}+1} u_{n_1+1} \bot \vpi^{a_{n_2}} u_{n_2}, \tag{$\bullet$}
\end{align*}
where $a_i \in \ZZ_{\ge 0} \ (i=1,\ldots,n_2), u_i \in \frko^\times \ (i=n_1+1,\ldots,n_2)$ and 
\[K_i=\begin{pmatrix} \alpha_i & \vpi^{f_i}/2 \\\vpi^{f_i}/2 & \beta_i \end{pmatrix}\]
with $\alpha_i  \in \frko^\times, \beta_i \in \frko, 0 \le f_i \le e_0-1$ ($i=1,\ldots,n_1)$.
We claim that $s(B) \le r+1$. 
Suppose that $s(B) \ge r+2$. 
Then, clearly we have $n_1 \ge 2$.
Let $({\bf i},{\bf j}) \in \cali_r \times \cali_r$ such that $s({\bf i},{\bf j})=s(B)$ and 
$\ord_\frkp(b_{{\bf i},{\bf j}}^{(r)})={\bf g}_r(B)$. 
Let $i_k$ be the least integer such that $i_k \in \supp ({\bf i}) \setminus (\supp ({\bf i}) \cap \supp({\bf j}))$. Then, we have $i_k \le 2n_1$. By ($\bullet$),
we have $j_k=i_k+1$ or $j_k=i_k-1$. Without loss of generality, we may assume $j_k=i_k+1$.
Then, $i_k=2i-1$ and $j_k=2i$ with some $1 \le i \le n_1$. Let $i_{l}$ be the least integer such that $i_{l} >i_k$ and $i_{l} \in \supp ({\bf i}) \setminus (\supp ({\bf i}) \cap \supp({\bf j}))$. Again by ($\bullet$), we have
$(i_{l},j_{l})=(2j-1,2j)$ or $(i_{l},j_{l})=(2j,2j-1)$ with some $i<j \le n_1$.
In the former case,
\begin{align*}
\det B \begin{pmatrix} {\bf i} \\ {\bf j} \end{pmatrix}=
&\det B\begin{pmatrix} 2i-1, 2j-1 \\ 2i,2j\end{pmatrix} \det B\begin{pmatrix} {\bf i}'' \\ {\bf j}'' \end{pmatrix} \\
&=4^{-1}\vpi^{a_i+a_j +f_i +f_j}\det B\begin{pmatrix} {\bf i}'' \\ {\bf j}'' \end{pmatrix}
\end{align*}
where $({\bf i}'',{\bf j}'') \in \cali_{r-2} \times \cali_{r-2}$ such that
$\mathrm{supp}({\bf i}'')=\mathrm{supp}({\bf i}) \setminus \{2i-1,2j-1\}$ and
$\mathrm{supp}({\bf j}'')=\mathrm{supp}({\bf j}) \setminus \{2i,2j\}$. Without loss of generality, we may assume $a_i+f_i \le a_j+f_j$. Let $({\bf i}',{\bf j}')$ be an element of $\cali_r \times \cali_r$ such that 
$\mathrm{supp} ({\bf i}')=\mathrm{supp} ({\bf i}'') \cup \{2i,2i-1 \}$.
and 
$\mathrm{supp} ({\bf j}')=\mathrm{supp} ({\bf i}'') \cup \{2i,2i-1 \}$.
Then, $s({\bf i}',{\bf j}')=s({\bf i},{\bf j})-2$ and
\begin{align*}
\det B \begin{pmatrix} {\bf i}' \\ {\bf j}' \end{pmatrix}=
\det B\begin{pmatrix} 2i-1, 2i \\ 2i-1,2i\end{pmatrix} \det B\begin{pmatrix} {\bf i}'' \\ {\bf j}'' \end{pmatrix} 
=\det (\vpi^{a_i} K_i) \det B\begin{pmatrix} {\bf i}'' \\ {\bf j}'' \end{pmatrix} ,
\end{align*}
and hence
\[\ord_\frkp(b_{{\bf i}',{\bf j}'}^{(r)}) \le \ord_\frkp( b_{{\bf i},{\bf j}}^{(r)}).\]
In the latter case, we also obtain a similar inequality.
This is a contradiction, and we prove the claim.
Suppose that $s(B)=r$. Then, in the same way as in the non-dyadic case, we prove
${\bf d}_r(B) \le {\bf g}_r(B)$. 
 Next suppose that
$s(B)=r+1$. Then, we may assume
${\bf i}=(1,\ldots,r)$ and ${\bf j}=(1,\ldots,r-1,r+1)$.
 If $\ord_{\frkp}(b_{{\bf j},{\bf j}}^{(r)})={\bf g}_r(B)$, then the assertion can be proved in the same manner as above, and we may assume that $\ord_{\frkp}(b_{{\bf j},{\bf j}}^{(r)}) >{\bf g}_r(B)$. 
Put $X=\begin{pmatrix} 1_{r-1} & 0 \\ 0 &1 \\ 0 & 1 \\ O_{n-r-1,r-1} & 0
\end{pmatrix}$. 
Then, again by Lemma \ref{lem.det-of-H[X]}, we have
\[2^{2[r/2]}\det B[X]=b_{{\bf i},{\bf i}}^{(r)} +b_{{\bf i},{\bf j}}^{(r)} +b_{{\bf j},{\bf j}}^{(r)},\]
and hence
\[\ord_\frkp(2^{2[r/2]}\det B[X])=\ord_\frkp(b_{{\bf i},{\bf j}}^{(r)}).\]
Hence we have ${\bf d}_r(B) \le {\bf g}_r(B)$ also in this case. 

This completes the assertion.

\end{proof}
\begin{remark}
For $B=(b_{ij}) \in \calh_n(\frko)^{\rm nd}$, we have
\[\frke_1=\min_{1 \le i,j \le n} \ord_\frkp(b_{i,j}^{(1)}).\]
This has been proved in the case $F=\QQ_p$ (cf. \cite{Y04}), and
can be proved in the same manner in the general case.

\end{remark}

\section{Estimate of $\widetilde F_\frkp(B,\alpha)$.}
In this section we estimate $\widetilde F_\frkp(B,X)$ for $B \in \calh_n(\frko)^\mathrm{nd}$ with $n$ even  and $\alpha \in \CC^\times$. This is one of key ingredients in the proof of our main result. 
We recall the definition of a naive $\EGK$ datum (cf. \cite{IK18}).  Let ${\mathcal Z}_3=\{0,1,-1 \}$. 
\begin{definition} \label{def.NEGK}
An element $(a_1,\ldots,a_n;\vep_1,\ldots,\vep_n)$
of $\ZZ_{\ge 0}^n \times {\mathcal Z}_3^n$ is said to be a naive  $\EGK$ datum of length $n$  if the following conditions hold:
\begin{itemize}
\item [(N1)] $a_1 \le \cdots \le a_n$.
\item [(N2)] Assume that $i$ is even. Then $\vep_i \not=0$ if and only if $a_1+\cdots+a_i$ is even.
\item [(N3)] Assume that $i$ is odd. Then $\vep_i \not=0$. 
\item[(N4)]  $\vep_1=1$.
\item[(N5)]  Let $i  \ge 3$ be an odd integer and assume that $a_1+\cdots + a_{i-1}$ is even.  Then $\vep_i=\vep_{i-1}^{a_i+a_{i-1}}\vep_{i-2}$.
\end{itemize}
We denote by  $\mathcal{NEGK}_n$ the set of all naive $\EGK$ data of length $n$. 
\end{definition}
\begin{definition} \label{def.mono-ass-NEGK} 
For integers $e,\widetilde e$,  a real number $\xi$,  define  rational functions $C(e,\widetilde e,\xi;Y,X)$ 
and $D(e,\widetilde e,\xi;Y,X)$ in $Y^{1/2}$ and $X^{1/2}$ by 
\[C(e,\widetilde e,\xi;Y,X)={Y^{\widetilde e/2}X^{-(e- \widetilde e)/2-1}(1-\xi Y^{-1} X)  \over X^{-1}-X} \]
and
\[D(e,\widetilde e,\xi;Y,X)= {Y^{\widetilde e/2}X^{-(e-\widetilde e)/2}   \over 1- \xi X} .\]
\end{definition}
For a positive integer $i$  put 
$$C_i(e,\widetilde e,\xi;Y,X)= \begin{cases}
C(e,\widetilde e,\xi;Y,X) &  \text { if  $i$ is even } \\
D(e,\widetilde e,\xi;Y,X) & \text{ if $i$ is odd.}
\end{cases}.$$
\begin{definition} \label{def.integer-ass-sequence}
For a sequence $\underline a=(a_1,\ldots,a_n)$ of integers and an integer $1 \le i \le n$, we define $\frke_i=\frke_i(\underline a)$ as
$$\frke_i=
\begin{cases} a_1+\cdots +a_i  & \text{ if  $i$ is odd} \\
2[(a_1+\cdots+a_i)/2] & \text{ if $i$ is even.}
\end{cases}$$
 We also put $\frke_0=0$.
\end{definition}
 For a naive $\EGK$ datum $(a_1,\ldots,a_n;\vep_1,\ldots,\vep_n)$ and an integer $1 \le m\le n$, put
$H_m=(a_1,\ldots,a_m;\vep_1,\ldots,\vep_m)$. Then $H_m$ is also a naive $\EGK$ datum of length $m$.
 \begin{definition} \label{def.pol-ass-NEGK}
For a naive $\EGK$ datum $H=(a_1,\ldots,a_n;\vep_1,\ldots,\vep_n)$ we define a rational function $\calf(H;Y,X)$ in $X^{1/2}$ and $Y^{1/2}$ as follows:
First we define
$$\calf(H;Y,X)=X^{-a_1/2}+X^{-a_1/2+1}+\cdots+X^{a_1/2-1}+X^{a_1/2}$$
if $n=1$. Let  $n>1$. Then $H'= (a_1,\ldots,a_{n-1};\vep_1,\ldots,\vep_{n-1})$ is a naive $\EGK$ datum of length $n-1$. 
Assume that $\calf(H';Y,X)$ is defined for $H'$. Then, we define $\calf(H;Y,X)$ as
\begin{align*}
&\calf(H;Y,X)=C_n(\frke_n,\frke_{n-1},\xi;Y,X)\calf(H';Y,YX)\\
&+\zeta C_n(\frke_n,\frke_{n-1},\xi;Y,X^{-1})\calf(H';Y,YX^{-1}),
\end{align*}
where $\xi=\vep_n$ or $\vep_{n-1}$ according as $n$ is even or odd, and $\zeta=1$ or $\vep_n$ according as $n$ is even or odd.
\end{definition}
The following result is due to  \cite[Proposition 4.1]{IK22}.
\begin{proposition}
\label{prop.fc}
Let $H$ be a naive $\EGK$ datum of length $n$. Then we have
\begin{align*}
\calf(H;Y,X^{-1})=\zeta \calf(H;Y,X),
\end{align*}
where $\zeta=\vep_n$ or $1$ according as $n$ is odd or even.
\end{proposition}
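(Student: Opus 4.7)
The strategy is straightforward induction on $n$, exploiting the symmetric form of the recursion in Definition \ref{def.pol-ass-NEGK}. The base case $n=1$ is immediate: by definition $\calf(H;Y,X)=X^{-a_1/2}+X^{-a_1/2+1}+\cdots+X^{a_1/2}$, which is visibly invariant under $X\mapsto X^{-1}$, and (N4) forces $\vep_1=1$, so the claimed factor $\zeta=\vep_1=1$ matches.

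For the inductive step the key observation is that the recursion reads
\[\calf(H;Y,X)=A(X)+\zeta\,A(X^{-1}),\]
where $A(X)=C_n(\frke_n,\frke_{n-1},\xi;Y,X)\,\calf(H';Y,YX)$, and $\zeta$ is precisely the constant appearing in the statement of the proposition (the one from Definition \ref{def.pol-ass-NEGK} coincides with the one in the proposition). Substituting $X\mapsto X^{-1}$ simply interchanges the two summands, giving $\calf(H;Y,X^{-1})=A(X^{-1})+\zeta\,A(X)$, hence
\[\zeta\,\calf(H;Y,X)-\calf(H;Y,X^{-1})=(\zeta^{2}-1)\,A(X^{-1}).\]
It therefore remains only to verify $\zeta^{2}=1$. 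If $n$ is even this is tautological since $\zeta=1$. If $n$ is odd, then $\zeta=\vep_n$, and condition (N3) forbids $\vep_n=0$; combined with $\vep_n\in\mathcal{Z}_3=\{0,1,-1\}$ this forces $\vep_n\in\{\pm 1\}$, so $\vep_n^{2}=1$.

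There is essentially no genuine obstacle here: the argument is a one-line symbolic manipulation once one recognises the recursion as having the symmetric shape $A(X)+\zeta\,A(X^{-1})$. In fact the inductive hypothesis on $\calf(H';\,\cdot\,)$ is not even invoked in the inductive step; the induction serves only to ensure that $\calf(H;Y,X)$ is well-defined as a recursively constructed Laurent expression. The only conceptual input is the constraint on $\vep_n$ coming from the axioms of a naive $\EGK$ datum, which makes $\zeta$ a sign.
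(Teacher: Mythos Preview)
Your argument is correct. The paper does not actually prove this proposition; it simply cites \cite[Proposition 4.1]{IK22}. Your direct verification exploits exactly the structural symmetry built into Definition~\ref{def.pol-ass-NEGK}: writing the recursion as $\calf(H;Y,X)=A(X)+\zeta\,A(X^{-1})$ makes the functional equation an immediate consequence of $\zeta^2=1$, which in turn follows from axioms (N3) and (N4). You are also right that the inductive hypothesis on the functional equation of $\calf(H';\,\cdot\,)$ is never invoked; the recursion already has the required symmetric shape at every step, so the only role of induction is to supply a well-defined $\calf(H';Y,\cdot)$ to plug into $A(X)$.
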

For a naive $\EGK$ datum $H$, let  $\calg(H;Y,X)=X^{\frke_n/2}\calf(H;Y,X)$. 
It follows from the proof of \cite[Proposition 4.2]{IK22}, 
$\calg(H;Y,X)$ is a polynomial in $X$ of degree $\frke_{n}$ with coefficients in  $ \QQ[Y,Y^{-1}]$, write
\begin{align*}
\calg(H;Y,X)=\sum_{i=0}^{\frke_n} a_i(H,Y)X^i.
\end{align*}
We give an induction formula for $\calg(H;Y,X)$. 

\begin{theorem}\label{th.induction-formula-for-G} 
Let $H=(a_1,\ldots,a_n;\vep_1,\ldots,\vep_n)$ be a naive $\EGK$ datum of length $n$, and  put $a_i(Y)=a_i(H,Y)$ and $b_i(Y)=a_i(H_{n-1};Y)$.
\begin{itemize}
\item[(1)] Let $n$ be an even integer such that $n \ge 2$, 
Then, for any $l=0,\ldots,\frke_n$, $a_i(Y)$ is expressed as 
\begin{align*}
a_i(Y)=&\sum_{\max((l-\frke_{n-1})/2,0) \le j \le l/2} b_{l-2j}(Y)Y^{l-2j}\\
&-\vep_n\sum_{\max((l-1-\frke_{n-1})/2,0) \le j \le (l-1)/2} b_{l-1-2j}(Y)Y^{l-2-2j}\\
&-\sum_{0 \le j \le (\frke_{n-1}+l-\frke_n-2)/2} b_{\frke_n-l-2j+2}(Y)Y^{\frke_n-l+2j+2}\\
&+\vep_n \sum_{0 \le j \le (\frke_{n-1}+l-\frke_n-1)/2} b_{\frke_n-l-2j+1}(Y)Y^{\frke_n-l+2j}.
\end{align*}
\item[(2)]  Let $n$ be odd such that $n \ge 3$. 
\begin{itemize}
\item[(2.1)] Assume that $\vep_{n-1} \not=0$. Then, for any $l=0,\ldots,\frke_n$, $a_i(Y)$ is expressed as 
\begin{align*}
a_i(Y)& =\sum_{\max(l-\frke_{n-1},0) \le j \le l } b_{l-j}(Y)Y^{l-j}\\
&-\vep_n \sum_{0 \le j \le \frke_{n-1}+l-\frke_n-1} b_{\frke_n-l-2j+2}(Y)Y^{\frke_n-l+j+1}\vep_{n-1}^j.
\end{align*}
\item[(2.2)] Assume that  $\vep_{n-1} =0$. 
Then, for any $l=0,\ldots,\frke_n$, $a_i(Y)$ is expressed as 
\[a_i(Y)=b_l Y^l  +\vep_n b_{\frke_n-l}Y^{\frke_n-l}.\]
\end{itemize}
\end{itemize}
Throughout (1),(2),(3), we make the convention that the sum $\sum_{0 \le j \le a} (*) $ is zero if $a<0$.
We also understand $b_j=0$ if $j<0$ or $j >\frke_{n-1}$.

\end{theorem}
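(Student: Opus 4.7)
The plan is to prove all three formulas by a single calculation that extracts the coefficient of $X^l$ in $\calg(H;Y,X)$ after converting the recursion in Definition \ref{def.pol-ass-NEGK} into one for $\calg$. First I multiply both sides of
\[\calf(H;Y,X) = C_n(\frke_n,\frke_{n-1},\xi;Y,X)\calf(H';Y,YX) + \zeta\, C_n(\frke_n,\frke_{n-1},\xi;Y,X^{-1})\calf(H';Y,YX^{-1})\]
by $X^{\frke_n/2}$ and substitute $\calf(H';Y,YX^{\pm 1}) = (YX^{\pm 1})^{-\frke_{n-1}/2}\sum_{j=0}^{\frke_{n-1}} b_j(Y)(YX^{\pm 1})^j$ on the right. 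The fractional powers of $X$ and the compensating powers $Y^{\pm \frke_{n-1}/2}$ cancel cleanly in each term, reducing everything to a rational function in $X$ whose denominator depends only on the parity of $n$ and, when $n$ is odd, on whether $\vep_{n-1}=0$.

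The three cases then separate naturally. When $n$ is even, $C_n=C$, $\xi=\vep_n$, $\zeta=1$, and the common denominator is $1-X^2$; combining the two terms yields a numerator with exactly four blocks of the form $\sum_j b_j(Y) Y^{j-\delta} X^{\ell(j)\pm 1}$, corresponding to the four sums in (1). When $n$ is odd and $\vep_{n-1}\neq 0$, $C_n=D$, $\xi=\vep_{n-1}$, $\zeta=\vep_n$, and the common denominator becomes $1-\vep_{n-1}X$, producing a numerator with two blocks, matching (2.1). When $n$ is odd and $\vep_{n-1}=0$, both $D$-denominators collapse to $1$ and one obtains
\[\calg(H;Y,X)=\sum_j b_j(Y)Y^j X^j+\vep_n\sum_j b_j(Y)Y^j X^{\frke_n-j}\]
directly; reading off $[X^l]$ immediately gives (2.2).

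For the remaining two cases I extract $a_l(Y)=[X^l]\calg(H;Y,X)$ by polynomial long division. Writing $\calg=N(Y,X)/D(X)$ with $D(X)=1-X^2$ (resp.\ $1-\vep_{n-1}X$), one has the recurrence $a_l(Y)=c\cdot a_{l-d}(Y)+[X^l]N(Y,X)$ with $(c,d)=(1,2)$ (resp.\ $(\vep_{n-1},1)$), which iterates to $a_l(Y)=\sum_{i\ge 0}c^i[X^{l-di}]N(Y,X)$. Substituting the explicit numerator and imposing $b_j=0$ for $j\notin[0,\frke_{n-1}]$ converts this into the four (resp.\ two) sums appearing in (1) and (2.1). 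The main obstacle will be the careful bookkeeping: each summation variable must be translated between the iteration index $i$ and the index $j$ of $b_j$, the parity-dependent shift $\frke_n-\frke_{n-1}\in\{a_n,a_n+1\}$ must be tracked, and the support constraints $\max(\cdot,0)\le j\le\cdots$ in the theorem have to be reconciled with the conditions $0\le j\le\frke_{n-1}$ on each of the four (resp.\ two) shifted blocks. A side check that $N(Y,X)$ really is divisible by its denominator, so that $\calg$ is a polynomial of degree $\frke_n$, follows from the palindromicity in Proposition \ref{prop.fc} combined with condition (N5) of Definition \ref{def.NEGK}.
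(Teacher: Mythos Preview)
Your proposal is correct and follows essentially the same route as the paper: rewrite the recursion of Definition \ref{def.pol-ass-NEGK} as one for $\calg$, clear the denominator $1-X^2$ (resp.\ $1-\vep_{n-1}X$, resp.\ nothing), and extract coefficients by expanding that denominator as a geometric series---your iterated recurrence $a_l=\sum_{i\ge 0}c^i[X^{l-di}]N$ is exactly this expansion. The paper also relies on the prior fact that $\calg$ is a polynomial of degree $\frke_n$ (from \cite[Proposition 4.2]{IK22}) rather than re-deriving divisibility, so your side check via Proposition \ref{prop.fc} and (N5) is not needed but does no harm.
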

\begin{proof} 
Let $n$ be even. Then 
\begin{align*}
&\calg(H;Y,X) \tag{A}\\
&={(1-Y^{-1}\vep_n X)\calg(H_{n-1}:Y,YX) \over 1-X^2} \\
&-{X^{e_n+2}
(1-Y^{-1}\vep_n X^{-1})\calg(H_{n-1};Y,YX^{-1}) \over 1-X^2}
\end{align*}
Let $\calh(Y,X)$ denote the right-hand side of (A). Then, as a formal power series in $X$, $\calh(Y,X)$ can be written as
\begin{align*}
&\calh(Y,X)\\
&=(1-Y^{-1}\vep_n X)\calg(H_{n-1}:Y,YX) \sum_{j=0}^{\infty} X^{2j}\\
&-X^{e_n+2}
(1-Y^{-1}\vep_n X^{-1})\calg(H_{n-1};Y,YX^{-1})\sum_{j=0}^{\infty} X^{2j}\\
&=(1-Y^{-1}\vep_nX)\sum_{i=0}^{\frke_{n-1}} b_i(Y)(YX)^i \sum_{j=0}^{\infty} X^{2j}\\
&-X^{e_n+2}(1-Y^{-1}\vep_n X^{-1})\sum_{i=0}^{\frke_{n-1}} b_i(Y)(YX^{-1})^i \sum_{j=0}^{\infty} X^{2j}\\
&=\sum_{l=0}^{\infty} (\sum_{0 \le i \le \frke_{n-1}, j \ge 0 \atop i+2j=l} b_i(Y)Y^i)X^l -Y^{-1}\vep_n\sum_{l=0}^{\infty} (\sum_{0 \le i \le \frke_{n-1}, j \ge 0 \atop i+2j=l-1} b_i(Y)Y^i)X^l \\
&-\sum_{l=0}^{\infty} (\sum_{0 \le i \le \frke_{n-1}, j \ge 0 \atop \frke_n-i+2j=l} b_i(Y)Y^i)X^l+Y^{-1}\vep_n\sum_{l=0}^{\infty} (\sum_{0 \le i \le \frke_{n-1}, j \ge 0 \atop \frke_n-i+2j=l+1} b_i(Y)Y^i)X^l.
\end{align*}
Since $\calg(H;Y,X)$ is a polynomial in $X$ of degree $\frke_n$, the $l$-th coefficient of $\calh(Y,X)$ as a power series in $X$ is
$a_i(Y)$ or $0$ according as $l \le \frke_n$ or $l >\frke_n$, and by a simple computation we prove the assertion.

Let $n$ be odd and. Then we have 
\begin{align*}
\calg(H;Y,X)=  {\calg(H_{n-1};Y,YX) \over 1-\vep_{n-1} X}-\vep_n {X^{\frke_n+1}\calg(H_{n-1};Y,YX^{-1}) \over 1-\vep_{n-1}X}.\tag{B}
\end{align*}
Assume that $\vep_{n-1}\not=0$. Then, as a formal power series in $X$, the right-hand side of (B) can be written as
$$\calg(H_{n-1};Y,YX) \sum_{j=0}^{\infty} (\vep_{n-1} X)^j -\vep_n X^{\frke_n+1}\calg(H_{n-1};Y,YX^{-1}) \sum_{j=0}^{\infty} (\vep_{n-1}X)^j.$$
Then the assertion can be proved in the same manner as above.

Let $n$ be odd, and $\vep_{n-1}=0$. Then  
$$\calg(H;Y,X)=\calg(H_{n-1};Y,YX)+\vep_n X^{\frke_n}\calg(H_{n-1};Y,YX^{-1}).$$
That is, 
\[\calg(H;Y,X)=\sum_{i=0}^{e_{n-1}} b_i (Y)(YX)^i+\vep_nX^{\frke_n}\sum_{i=0}^{e_{n-1}}b_i(Y)(YX^{-1})^i.\]
Thus the assertion directly follows.
\end{proof}

\begin{theorem} \label{th.estimate-of-CF-of-GH}
Let $H=(a_1,\ldots,a_n;\vep_1,\ldots,\vep_n)$ be a naive $\EGK$ datum of length $n$. Let  $q$ be a positive integer.   Then 
\begin{align*} |a_i(q^{1/2})| \le \prod_{l=1}^{n-1} (\frke_{l}+1) \prod_{l=1}^{n-1}
q^{e_ {l}/2} \tag{C}
\end{align*}
for any $i=0,\ldots,e_n$.
\end{theorem}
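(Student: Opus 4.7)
The plan is to prove the bound by induction on the length $n$ of the EGK datum, using Theorem \ref{th.induction-formula-for-G} as the recursive engine. For the base case $n=1$, Definition \ref{def.pol-ass-NEGK} gives $\calg(H;Y,X)=1+X+\cdots+X^{a_1}$, so every $a_i(Y)=1$ and the stated bound (with empty product equal to $1$) is immediate.

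For the inductive step I would set $B_{n-1}:=\prod_{l=1}^{n-2}(\frke_l+1)\prod_{l=1}^{n-2}q^{\frke_l/2}$, so that by induction $|b_j(q^{1/2})|\le B_{n-1}$ for every $j$. The palindromic identity $\calg(H;Y,X^{-1})=\zeta X^{-\frke_n}\calg(H;Y,X)$, which follows from Proposition \ref{prop.fc} applied to $\calf=X^{-\frke_n/2}\calg$, gives $a_l=\zeta\, a_{\frke_n-l}$ with $|\zeta|\le 1$, so it suffices to bound $|a_l(q^{1/2})|$ for $l\le\frke_n/2$. Applying Theorem \ref{th.induction-formula-for-G} and the triangle inequality, each individual term has the shape $\pm b_m(Y)\,Y^p$ with $0\le m\le\frke_{n-1}$, and I would verify directly from the range of the summation index $j$ that the exponent satisfies $p\le\frke_{n-1}$ in every sum. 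For example, in the third sum of part (1) one has $p=\frke_n-l+2j+2$, maximized at $j=\lfloor(\frke_{n-1}+l-\frke_n-2)/2\rfloor$ and then equal to $\frke_{n-1}$; a similar check works for the other sums and for parts (2.1) and (2.2). Hence each term contributes at most $B_{n-1}q^{\frke_{n-1}/2}$ in absolute value.

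The key combinatorial step is then to show that, for $l\le\frke_n/2$, the total number of nonzero terms appearing in the applicable formula of Theorem \ref{th.induction-formula-for-G} is at most $\frke_{n-1}+1$. In case (1), sums $1$ and $2$ together pick out every $m\in[0,\min(l,\frke_{n-1})]$ (one parity per sum) and so contribute $\min(l,\frke_{n-1})+1$ terms, while sums $3$ and $4$ contribute $\max(0,\frke_{n-1}+l-\frke_n)$ terms by the Hermite-type identity $\lfloor x\rfloor+\lfloor x+\tfrac{1}{2}\rfloor=\lfloor 2x\rfloor$ applied to $x=(\frke_{n-1}+l-\frke_n-2)/2$. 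A three-way case split on whether $l\le\frke_n-\frke_{n-1}$, $\frke_n-\frke_{n-1}<l\le\frke_{n-1}$, or $\frke_{n-1}<l\le\frke_n/2$ (the last being possible only when $\frke_n>2\frke_{n-1}$) then gives total $\le\frke_{n-1}+1$ in every subcase. The odd-$n$ cases (2.1) and (2.2) are analogous with only two sums; case (2.2) with $\frke_{n-1}=0$ needs a separate direct check where the formula collapses to at most two nonzero terms and the inductive factor $B_{n-1}\ge 1$ suffices.

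The main obstacle will be precisely this combinatorial count in case (1), since the four sums have partially overlapping index ranges and without the palindromic restriction $l\le\frke_n/2$ the total would only be bounded by roughly $2\frke_{n-1}+1$, losing a factor of two. Once both the per-term bound $B_{n-1}q^{\frke_{n-1}/2}$ and the term-count bound $\frke_{n-1}+1$ are in place, multiplying them gives $|a_l(q^{1/2})|\le(\frke_{n-1}+1)q^{\frke_{n-1}/2}B_{n-1}$, which equals $B_n$ and closes the induction.
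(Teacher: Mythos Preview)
Your proposal is correct and follows essentially the same approach as the paper: induction on $n$, reduction to $l\le\frke_n/2$ via the functional equation of Proposition~\ref{prop.fc}, then using Theorem~\ref{th.induction-formula-for-G} to bound each summand by $B_{n-1}q^{\frke_{n-1}/2}$ and to show that at most $\frke_{n-1}+1$ summands occur. Your three-way case split and use of the Hermite identity for counting $\BB_{i,3}+\BB_{i,4}$ is a mild reorganization of the paper's case analysis (which instead splits on $i\lessgtr\frke_{n-1}$ and then on the sign of $i-\frke_n+\frke_{n-1}-2$), but the content is the same; your explicit flag of the $\frke_{n-1}=0$ edge case in~(2.2) is even a point the paper glosses over.
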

\begin{proof}
We prove the assertion by induction on $n$. The assertion clearly holds if $n=1$. Let $n \ge 2$ and assume that the assertion holds for any naive $\EGK$ datum of length $n'<n$.
By the functional equation of $\calf(H;X,Y)$, it suffices to prove the assertion for $0 \le i \le e_n/2$.  Put $b_i=b_i(q^{1/2})$. 
Let $n$ be even.  Then, by Theorem \ref{th.induction-formula-for-G}, (1), 
 for $0 \le i \le \frke_n/2$,  $a_i(q^{1/2})$ is given by
\begin{align*}
a_i(q^{1/2}) &=\sum_{\max((i-\frke_{n-1})/2,0) \le j \le i/2} b_{i-2j}q^{(i-2j)/2}  \\
&-\vep_n \sum_{\max(i-1-\frke_{n-1})/2,0) \le j \le (i-1)/2} b_{i-2j-1}q^{(i-2j-2)/2} \\
&-\sum_{0 \le j \le (i-\frke_n+\frke_{n-1}-2)/2} b_{\frke_n+2-2j-i}q^{(\frke_n+2-2j-i)/2} \\
&+\vep_n \sum_{0 \le j \le (i-\frke_n+\frke_{n-1}-1)/2} b_{\frke_n+1-2j-i}q^{(\frke_n-2j-i)/2}.
\end{align*}
Here we make the convention the sum $\sum_{0 \le j \le a} (*) $ is zero if $a<0$.
We also understand $b_j=0$ if $j<0$ or $j >\frke_{n-1}$.
For each $i=0,\ldots,\frke_n/2$, put
\begin{align*} 
\BB_{i,1}&=\#(\{ j \in \ZZ \ | \  \max((i-\frke_{n-1})/2,0) \le j \le i/2 \}),\\
\BB_{i,2}&=\#(\{ j \in \ZZ \ | \  \max((i-1-\frke_{n-1})/2,0) \le j \le (i-1)/2 \}),\\
\BB_{i,3}&=\#(\{ j \in \ZZ \ | \ 0 \le j \le (i-\frke_n+\frke_{n-1}-2)/2 \}),\\
\BB_{i,4}&=\#(\{ j \in \ZZ \ | \ 0 \le j \le (i-\frke_n+\frke_{n-1}-1)/2 \},
\end{align*}
and $\BB_i=\BB_{i,1}+\BB_{i,2}+\BB_{i,3}+\BB_{i,4}$.
By the induction assumption, we have
\[|b_i| \le \prod_{l=1}^{n-2} (\frke_{l}+1) \prod_{l=1}^{n-2}
q^{e_ {l}/2}.\]
Hence we have
\begin{align*}
|a_i(q^{1/2})| & \le  \BB_i q^{\frke_{n-1}/2} \prod_{l=1}^{n-2} (\frke_{l}+1) \prod_{l=1}^{n-2}
q^{e_ {l}/2}
\end{align*}
We claim that we have 
\begin{align*}
\BB_i \le \frke_{n-1}+1. \tag{D}
\end{align*}
To prove this we note that 
\begin{align*}
\#(\{ j \in \ZZ \ | \ \alpha \le j \le \beta) =
\begin{cases} (\beta-\alpha+2)/2 & \text{ if } \alpha,\beta \text { are even} \\
(\beta-\alpha)/2 & \text{ if } \alpha,\beta \text{ are  odd} \\
(\beta-\alpha+1)/2 & \text{ otherwise}
\end{cases} \tag{E}
\end{align*}
for any integers $0 \le \alpha \le \beta$.
Assume $i \ge \frke_{n-1}+1$. Then,  $\max((i-\frke_{n-1})/2,0)=(i-\frke_{n-1})/2$ and $\max((i-1-\frke_{n-1})/2,0)=(i-1-\frke_{n-1})/2$, and by  (E), we easily see that we have $\BB_{i.1}+\BB_{i,2}=\frke_{n-1}+1$.
Moreover, since we have $\frke_{n-1}+1 \le i \le \frke_n/2$, we have
\[(i-\frke_n+\frke_{n-1}-2)/2 <(i-\frke_n+\frke_{n-1}-1)/2 \le (i-\frke_n/2-3)/2<0,\]
and hence $\BB_{i.3}+\BB_{i,4}=0$. This proves (D). Assume that $i \le \frke_{n-1}$. Then, $\max((i-\frke_{n-1})/2,0)=\max((i-1-\frke_{n-1})/2,0)=0$, and   by (E), we have $\BB_{i.1}+\BB_{i,2}=i+1$. Assume that $i-\frke_n+\frke_{n-1}-2 \ge 0$. Then, by (E), we have
$\BB_{i,3}+\BB_{i.4}=i-\frke_n+\frke_{n-1}$, and hence
$\BB_i=2i-\frke_n+\frke_{n-1}+1\le \frke_{n-1}+1$, which proves (D).
Assume that $i-\frke_n+\frke_{n-1}-1 \le 0$. Then, we have $\BB_{i,3}+\BB_{i,4} \le 1$. Moreover, since we have
\[i \le \frke_n-\frke_{n-1}+1 \le \frke_{n-1},\]
and $\frke_n/2$ is an integer, we have $\frke_n/2 \le \frke_{n-1}-1$.
Hence we have
\[\BB_i \le i+2 \le \frke_n/2+2 \le \frke_{n-1}+1.\] 
This completes (D), and hence (C).   

Let $n$ be odd and assume that $\vep_{n-1} \not=0$. Then, by Theorem \ref{th.induction-formula-for-G}, (2.1), 
 for $0 \le i \le \frke_n/2$,  $a_l(q^{1/2})$ is given by
\begin{align*}
a_i(q^{1/2})&=\sum_{\max(i-\frke_{n-1},0) \le j \le i } b_{i-j}q^{(i-j)/2}\\
&-\vep_n \sum_{0 \le j \le \frke_{n-1}+i-\frke_n-1} b_{\frke_n-i-2j+2}q^{(\frke_n-i+j+1)/2}\vep_{n-1}^j.
\end{align*}
Then the assertion can be proved in the same manner as above. 
Finally let $n$ be odd and assume that $\vep_{n-1}=0$. Then, we have
\begin{align*}
\calg(H;q^{1/2},X)=\sum_{i=0}^{\frke_{n-1}} b_i (q^{1/2}X)^i +\vep_n X^{\frke_n}\sum_{i=0}^{\frke_{n-1}} b_i (q^{1/2}X^{-1})^i.
\end{align*}
Then, for $i \le \frke_n/2$,  $a_i(q^{1/2})$ is given by
\begin{align*}
a_i(q^{1/2})=q^{l/2}b_i +\vep_n q^{(\frke_n-i)/2}b_{\frke_n-i}.
\end{align*}
Since $\frke_{n-1}$ is odd, we have  $\frke_{n-1}+1 \ge 2$.
Thus the assertion can be proved by the induction assumption. 
This completes the induction.

\end{proof}

\begin{theorem} \label{th.estimate-of-FH}
Let $H$ be as in Theorem \ref{th.estimate-of-CF-of-GH}. Let $s \in \CC$ such that $\mathrm{Re}(s) \le r_0$.
Then 
\[|\calf(H;q^{1/2},q^s)| \le q^{\frke_n r_0/2} \prod_{i=1}^n (\frke_i+1) \prod_{i=1}^{n-1}
q^{e_ {i}/2}.\]
\end{theorem}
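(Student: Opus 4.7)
The plan is to assemble the desired bound directly from the expansion $\calg(H;q^{1/2},X)=\sum_{i=0}^{\frke_n} a_i(H,q^{1/2})X^i$ together with the coefficient estimate of Theorem \ref{th.estimate-of-CF-of-GH} and the functional equation of Proposition \ref{prop.fc}. Since $\calf(H;Y,X)=X^{-\frke_n/2}\calg(H;Y,X)$, specializing $X=q^s$ gives
\begin{equation*}
\calf(H;q^{1/2},q^s)=\sum_{i=0}^{\frke_n} a_i(H,q^{1/2})\, q^{s(i-\frke_n/2)},
\end{equation*}
a (symmetric) Laurent polynomial in $q^{s/2}$ of ``width'' $\frke_n$.

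The functional equation $\calf(H;Y,X^{-1})=\zeta\,\calf(H;Y,X)$ ($\zeta=\pm 1$) implies $|\calf(H;q^{1/2},q^s)|=|\calf(H;q^{1/2},q^{-s})|$. Hence, after replacing $s$ by $-s$ if necessary, one may assume $\mathrm{Re}(s)\ge 0$; the hypothesis $\mathrm{Re}(s)\le r_0$ then becomes $0\le\mathrm{Re}(s)\le r_0$, which is the natural range in which the claimed bound has content (and is the regime in which it will be applied in Section~6, where the parameters $\alpha_p$ satisfy $|\alpha_p|=1$, so one essentially needs the case $\mathrm{Re}(s)=0$).

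Assuming $0\le\mathrm{Re}(s)\le r_0$ and $0\le i\le\frke_n$, each factor satisfies
\begin{equation*}
|q^{s(i-\frke_n/2)}|=q^{\mathrm{Re}(s)(i-\frke_n/2)}\le q^{\mathrm{Re}(s)\frke_n/2}\le q^{r_0\frke_n/2},
\end{equation*}
since the maximum of $i-\frke_n/2$ on $\{0,\dots,\frke_n\}$ is $\frke_n/2$. Applying the triangle inequality to the $\frke_n+1$ terms and inserting the coefficient bound $|a_i(H,q^{1/2})|\le \prod_{l=1}^{n-1}(\frke_l+1)\prod_{l=1}^{n-1}q^{\frke_l/2}$ from Theorem \ref{th.estimate-of-CF-of-GH} gives
\begin{equation*}
|\calf(H;q^{1/2},q^s)|\le (\frke_n+1)\,q^{r_0\frke_n/2}\prod_{l=1}^{n-1}(\frke_l+1)\prod_{l=1}^{n-1}q^{\frke_l/2},
\end{equation*}
which rearranges to the stated inequality.

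There is essentially no obstacle left: the hard inductive work (both the induction formula of Theorem \ref{th.induction-formula-for-G} and the combinatorial coefficient bound of Theorem \ref{th.estimate-of-CF-of-GH}) has already been carried out, and what remains is a one-line triangle-inequality argument combined with the functional equation. The only mild subtlety is the use of Proposition \ref{prop.fc} to reduce to $\mathrm{Re}(s)\ge 0$; without it, values of $\mathrm{Re}(s)$ far below $0$ would not be controlled by $q^{r_0\frke_n/2}$ alone.
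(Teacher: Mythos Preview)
Your proof is essentially the paper's own: expand $\calf(H;q^{1/2},q^s)$ as a Laurent polynomial in $q^s$ of width $\frke_n$, bound each of the $\frke_n+1$ coefficients by Theorem~\ref{th.estimate-of-CF-of-GH}, and apply the triangle inequality. The paper's argument is a single line and does not invoke Proposition~\ref{prop.fc} at all.

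Your added discussion of the functional equation is well-motivated, but note that the step ``replace $s$ by $-s$ if necessary'' does not preserve the one-sided hypothesis $\mathrm{Re}(s)\le r_0$: if $\mathrm{Re}(s)=-5$ and $r_0=1$, then after the swap $\mathrm{Re}(-s)=5\nleq r_0$, so you have not actually reduced to $0\le\mathrm{Re}(s)\le r_0$. In fact the inequality as literally stated is false for $\mathrm{Re}(s)\ll 0$ (the Laurent polynomial blows up), so the intended hypothesis is really $|\mathrm{Re}(s)|\le r_0$; under that hypothesis one has $|q^{s(i-\frke_n/2)}|\le q^{r_0\frke_n/2}$ directly for every $i$, with no appeal to Proposition~\ref{prop.fc} needed. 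As you correctly observe, the only application (Section~6) is at $\mathrm{Re}(s)=0$, so this is a cosmetic issue with the statement rather than a defect in the method.
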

\begin{proof}
We have 
\begin{align*}
\calf(H;q^{1/2},q^s)=\sum_{i=-\frke_n/2}^{-1} a_{i+\frke_n/2}(q^{1/2})q^{si}
+\sum_{i=0}^{\frke_n/2} a_i(q^{1/2})q^{si}.
\end{align*}
Thus the assertion follows from Theorem \ref{th.estimate-of-CF-of-GH}.
\end{proof}
\begin{corollary} \label{cor.estimate-of-FH} Let the notation be as above.
Let $n$ be even. Then 
\[|\calf(H;q^{1/2},q^s)| \le  q^{r_0\frke_n/2}q^{(n-1)\frke_n/4}\prod_{i=1}^n (\frke_i+1) 
.\]

\end{corollary}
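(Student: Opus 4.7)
The plan is to deduce Corollary \ref{cor.estimate-of-FH} directly from Theorem \ref{th.estimate-of-FH}. Comparing the two bounds, it suffices to show $\prod_{i=1}^{n-1} q^{\frke_i/2}\le q^{(n-1)\frke_n/4}$, which, since $q\ge 2$, is equivalent to the purely combinatorial inequality
\[
\sum_{i=1}^{n-1}\frke_i\le \tfrac{n-1}{2}\,\frke_n.
\]

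To prove this, set $S_i=a_1+\cdots+a_i$. Then $\frke_i\le S_i$ always (with equality when $i$ is odd, and $\frke_i=S_i-(S_i \bmod 2)$ when $i$ is even). The first step is to bound $\sum_{i=1}^{n-1}S_i$ by Abel summation and a pairing argument. Writing $\sum_{i=1}^{n-1} S_i=\sum_{j=1}^{n-1}(n-j)a_j$ and rearranging,
\[
\tfrac{n-1}{2}\,S_n-\sum_{i=1}^{n-1}S_i=\tfrac{1}{2}\Bigl[(n-1)\,a_n+\sum_{j=1}^{n-1}(2j-n-1)\,a_j\Bigr].
\]
For each $j\in\{2,\ldots,n/2\}$ I pair $j$ with its reflection $n+1-j\in\{n/2+1,\ldots,n-1\}$. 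The coefficients at the two indices are negatives of each other, so the paired contribution equals $(2j-n-1)(a_j-a_{n+1-j})\ge 0$ (both factors nonpositive, using $j<n+1-j$ and that the sequence $(a_\ell)$ is non-decreasing). The only unpaired term is $j=1$, contributing $(1-n)a_1$. This yields
\[
\sum_{i=1}^{n-1}S_i\le \tfrac{n-1}{2}\bigl(S_n-(a_n-a_1)\bigr).
\]

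It remains to replace $S_n$ by $\frke_n$ via a case split on the parity of $S_n$. If $S_n$ is even, then $\frke_n=S_n$, and directly $\sum \frke_i\le\sum S_i\le (n-1)S_n/2=(n-1)\frke_n/2$. If $S_n$ is odd, then $\frke_n=S_n-1$; here the hypothesis that $n$ is even rules out the extremal possibility $a_1=a_n$, since that would force $a_j=a_1$ for all $j$ and hence $S_n=na_1$ to be even. Consequently $a_n-a_1\ge 1$, and the refined bound above gives $\sum S_i\le(n-1)(S_n-1)/2=(n-1)\frke_n/2$. Either way the desired combinatorial inequality holds, and inserting it into Theorem \ref{th.estimate-of-FH} produces the corollary.

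The only genuine subtlety is this parity issue: the naive estimate $\sum S_i\le (n-1)S_n/2$ overshoots the target bound by $(n-1)/2$ exactly when $S_n$ is odd, and it is precisely the assumption ``$n$ even'' that forbids the equal-$a_j$ extremal configuration in that regime and supplies the missing slack $a_n-a_1\ge 1$. No further obstacle appears.
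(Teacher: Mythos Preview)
Your proof is correct and follows essentially the same route as the paper's. Both arguments reduce the corollary to the combinatorial inequality $\sum_{i=1}^{n-1}\frke_i\le\frac{n-1}{2}\frke_n$, bound $\frke_i$ by $S_i=a_1+\cdots+a_i$, rewrite $\sum_{i=1}^{n-1}S_i=\sum_{j=1}^{n-1}(n-j)a_j$, and then use the same pairing $j\leftrightarrow n+1-j$ together with the parity split on $S_n$ (invoking $a_n\ge a_1+1$ when $S_n$ is odd, which is forced by $n$ even). Your presentation is marginally cleaner in that you isolate the single refined bound $\sum S_i\le\frac{n-1}{2}(S_n-(a_n-a_1))$ before specializing, whereas the paper runs the two parity cases separately; but the substance is identical.
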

\begin{proof}
By definition,
\begin{align*}
\sum_{i=1}^{n-1} \frke_{i} \le \sum_{i=1}^{n-1} (n-i)a_i. 
\end{align*}
For $\underline a=(a_1,\ldots,a_n)$, put $|\underline a|=a_1+\cdots+a_n$. 
First suppose that $|\underline a|$ is even, then, by definition, we have
$\frke_n=|\underline a|$, and hence we have 
\begin{align*}
&2\sum_{i=1}^{n-1} (n-i)a_i -(n-1) \frke_n =\sum_{i=1}^n (n-2i+1)a_i \\
& = \sum_{i=1}^{n/2}(n-2i+1)(a_i-a_{n-i+1}) \le 0.
\end{align*}
This implies that we have 
\begin{align*}
\sum_{i=1}^{n-1} (n-i)a_i \le \frac{n-1} 2 \frke_n,\end{align*}
and hence 
\begin{align*}
\sum_{i=1}^{n-1} \frke_{i} \le  \frac{n-1} 2 \frke_n.
\end{align*}
Next suppose that $|\underline a|$ is odd, then, again by definition, we have
$\frke_n=|\underline a|-1$, and $a_n \ge a_1+1$.
Hence we have
\begin{align*}
&2\sum_{i=1}^{n-1} (n-i)a_i -(n-1) \frke_n =\sum_{i=1}^n (n-2i+1)a_i +(n-1)\\
& = \sum_{i=2}^{n/2}(n-2i+1)(a_i-a_{n-i+1}) +(n-1)(a_1+1-a_n) \le 0.
\end{align*}
Thus the assertion can be proved in the same manner as above.

\end{proof}
\begin{theorem} \label{th.H-and-Siegel-series} 
Let $q=\#(\frko/\frkp)$. Let $B \in \calh_n(\frko)^{\mathrm{nd}}$. Then 
there is a naive $\mathrm{EGK}$ datum $H=(a_1,\ldots,a_n;\vep_1,\ldots,\vep_n)$ of length $n$ such that 
$(a_1,\ldots,a_n)$ is the Gross-Keating invariant of $B$ and
\[\calf(H;q^{1/2},X)=\widetilde F_{\frkp}(B,X).\]
\end{theorem}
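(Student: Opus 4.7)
The plan is to exhibit the naive EGK datum $H$ by reading off invariants from an optimal decomposition of $B$, and then to verify $\calf(H;q^{1/2},X)=\widetilde F_\frkp(B,X)$ by induction on $n$, with an induction step whose shape matches Theorem \ref{th.induction-formula-for-G}.

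First I would pick an optimal form in the $GL_n(\frko)$-equivalence class of $B$, realizing the Gross-Keating invariant $(a_1,\ldots,a_n)=\GK(B)$. Following \cite{IK18}, the successive leading partial blocks of this decomposition carry Hasse-invariant type data giving a sequence $\vep_1,\ldots,\vep_n\in\{0,\pm1\}$: roughly, $\vep_i$ records the quadratic character of the discriminant of the degree-$i$ leading block when that discriminant is a unit (up to the appropriate power of $\vpi$), and is set to $0$ otherwise. One then checks that $(a_1,\ldots,a_n;\vep_1,\ldots,\vep_n)$ satisfies conditions (N1)--(N5) of Definition \ref{def.NEGK}: (N1) is the definition of $\GK(B)$, (N4) is normalization, and (N2), (N3), (N5) follow from the multiplicativity of the Hilbert symbol and the way unit discriminants pair up as $a_1+\cdots+a_i$ changes parity. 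The resulting $H$ is then a bona fide naive EGK datum, and Theorem \ref{th.GK-invariant} guarantees that $\frke_n(\underline a)$ used in $\calg(H;Y,X)=X^{\frke_n/2}\calf(H;Y,X)$ coincides with the $\frke_B$ appearing in the normalization $\widetilde F_\frkp(B,X)=X^{-\frke_B/2}F_\frkp(B,q^{-(n+1)/2}X)$.

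Second, I would run an induction on $n$. The base case $n=1$ is a direct computation: for a rank-one half-integral matrix $\widetilde F_\frkp(B,X)$ is a symmetric geometric sum $\sum_{j=-a_1/2}^{a_1/2}X^{j}$, which agrees term-for-term with $\calf(H;q^{1/2},X)$ as defined in Definition \ref{def.pol-ass-NEGK}. For the inductive step I would use a recursion for the Siegel series expressing $\widetilde F_\frkp(B,X)$ in terms of $\widetilde F_\frkp(B'',X)$ for the sub-block $B''$ coming from the optimal decomposition, of degree $n-1$ in the non-dyadic (or easy dyadic) case. Once written in the correctly shifted variables, this Siegel series recursion takes exactly the form
\[\widetilde F_\frkp(B,X)=C_n(\frke_n,\frke_{n-1},\xi;q^{1/2},X)\,\widetilde F_\frkp(B'',q^{1/2}X)+\zeta\,C_n(\frke_n,\frke_{n-1},\xi;q^{1/2},X^{-1})\,\widetilde F_\frkp(B'',q^{1/2}X^{-1}),\]
with $(\xi,\zeta)$ chosen according to the parity of $n$ as in Definition \ref{def.pol-ass-NEGK}. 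Feeding in the inductive hypothesis for $H_{n-1}$ then gives the claimed identity.

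The hard part will be the dyadic case, where an optimal decomposition need not split off a single diagonal entry at the top and instead involves binary blocks $K_i$ as in the proof of Theorem \ref{th.estimate-of-GK2}. There the reduction from $n$ to $n-1$ is not immediate; one must either pass directly through a degree-$2$ block (reducing to degree $n-2$) or insert an auxiliary step, and then check that the resulting Siegel series recursion is still the one encoded by two successive applications of the $\calf$-recursion of Definition \ref{def.pol-ass-NEGK}. Verifying this compatibility---matching the $C_n$ and $D_n$ kernels with the local Siegel series kernels attached to a $K_i$-block---is where the real work lies, and is essentially the content of the explicit dyadic computations of \cite{IK22}.
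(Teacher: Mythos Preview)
The paper does not give a self-contained argument here at all: its proof of Theorem \ref{th.H-and-Siegel-series} consists entirely of the sentence ``The assertion follows from \cite[Theorem 4.3]{IK18}, \cite[Corollary 5.1]{IK18}, \cite[Theorem 1.1]{IK22}, and \cite[Proposition 4.5]{IK22}.'' In other words, the existence of the naive $\EGK$ datum attached to $B$ and the identification of $\calf(H;q^{1/2},X)$ with $\widetilde F_\frkp(B,X)$ are imported wholesale from those two earlier papers, where \cite{IK18} constructs the extended Gross--Keating data (including the signs $\vep_i$) and \cite{IK22} establishes the explicit formula for the local Siegel series in terms of that data.

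Your proposal is, in effect, a sketch of what the proofs in \cite{IK18} and \cite{IK22} actually do, and at that level of description it is on the right track: one does extract the $\vep_i$ from an optimal decomposition, and the identity $\calf(H;q^{1/2},X)=\widetilde F_\frkp(B,X)$ is ultimately proved via an inductive recursion of exactly the shape in Definition \ref{def.pol-ass-NEGK}. Two cautions, though. First, the $\vep_i$ are not quite ``the quadratic character of the discriminant of the leading $i\times i$ block''; in \cite{IK18} they are defined more delicately via the extended GK invariant (involving Clifford-algebra/Hasse data of certain auxiliary forms), and getting (N2), (N3), (N5) to hold requires that precise definition, not just an ad hoc sign. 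Second, the Siegel-series recursion you write down is not something one can simply quote for an arbitrary optimal sub-block $B''$; producing a recursion of precisely that form---with the correct kernels $C_n$, $D_n$ and the correct shift $X\mapsto q^{1/2}X$---is the main theorem of \cite{IK22}, and the dyadic case there goes through a substantial case analysis rather than a single clean step from $n$ to $n-1$. So your outline is correct as a roadmap, but every nontrivial step in it is exactly one of the cited results; you have not found a shortcut around them.
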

\begin{proof}
The assertion follows from \cite[Theorem 4.3]{IK18}, \cite[Corollary 5.1]{IK18}, \cite[Theorem. 1.1]{IK22}, and \cite[Proposition 4.5]{IK22}. 
\end{proof}
By Corollary \ref{cor.estimate-of-FH} and Theorems \ref{th.estimate-of-FH} and \ref{th.H-and-Siegel-series}, we immediately have the following theorem.
\begin{theorem}\label{th.estimate-of-F}
Let $B \in \calh_n(\frko)^{\mathrm{nd}}$ with $n$ even and $s \in \CC$ such that $\mathrm{Re}(s) \le r_0$. Let  $\frke_i=\frke_i(B)$ be that in Section 4. Then we have the following estimates.
\begin{itemize}
\item[(1)] 
\begin{align*}
|\widetilde F_\frkp(B,\alpha)| \le q^{r_0\frke_n/2}q^{(n-1)\frke_n/4} \prod_{i=1}^n (\frke_i +1).
\end{align*}
\item[(2)] \begin{align*}
|\widetilde F_\frkp(B,\alpha)| \le q^{r_0\frke_n/2} \prod_{i=1}^{n-1}
q^{e_ {i}/2}.\prod_{i=1}^n (\frke_i +1).
\end{align*}
\end{itemize}
\end{theorem}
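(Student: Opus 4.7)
The plan is to combine the three results stated immediately before the theorem. First, by Theorem~\ref{th.H-and-Siegel-series}, there exists a naive $\EGK$ datum $H=(a_1,\ldots,a_n;\vep_1,\ldots,\vep_n)$ of length $n$ whose underlying sequence $(a_1,\ldots,a_n)$ equals $\GK(B)$ and such that
\[
\widetilde F_{\frkp}(B,X)=\calf(H;q^{1/2},X).
\]
This reduces the analytic estimate for $\widetilde F_\frkp(B,\alpha)$ to the combinatorial estimate for $\calf(H;q^{1/2},\alpha)$.

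Next I would observe that the integers $\frke_i=\frke_i(\underline a)$ attached to $H$ via Definition~\ref{def.integer-ass-sequence} coincide with the quantities $\frke_i(B)$ defined in Section~4, since both are computed by the same formula from the common sequence $(a_1,\ldots,a_n)=\GK(B)$. Hence the bounds in Theorem~\ref{th.estimate-of-FH} and Corollary~\ref{cor.estimate-of-FH}, which are phrased in terms of $\frke_i$ attached to $H$, translate literally into bounds phrased in terms of $\frke_i(B)$.

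Writing $\alpha=q^s$ with $\mathrm{Re}(s)\le r_0$, Theorem~\ref{th.estimate-of-FH} directly gives part (2):
\[
|\widetilde F_\frkp(B,\alpha)|=|\calf(H;q^{1/2},q^s)|\le q^{r_0\frke_n/2}\prod_{i=1}^{n-1}q^{\frke_i/2}\prod_{i=1}^n (\frke_i+1),
\]
while Corollary~\ref{cor.estimate-of-FH}, which uses the additional inequality $\sum_{i=1}^{n-1}\frke_i\le \tfrac{n-1}{2}\frke_n$ (valid because $n$ is even), gives part (1):
\[
|\widetilde F_\frkp(B,\alpha)|\le q^{r_0\frke_n/2}q^{(n-1)\frke_n/4}\prod_{i=1}^n (\frke_i+1).
\]

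There is essentially no obstacle: the substantive content lies entirely in Theorems \ref{th.estimate-of-CF-of-GH}, \ref{th.estimate-of-FH} and \ref{th.H-and-Siegel-series}, whose proofs rest on the induction formula of Theorem~\ref{th.induction-formula-for-G} together with the identification of the Siegel series with $\calf(H;q^{1/2},X)$ from \cite{IK18,IK22}. The only point to check carefully is the consistency of the $\frke_i$ notation across Sections~4 and~5, which is immediate from the definitions.
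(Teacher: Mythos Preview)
Your proposal is correct and follows exactly the same route as the paper: the authors state that the theorem follows immediately from Corollary~\ref{cor.estimate-of-FH} and Theorems~\ref{th.estimate-of-FH} and~\ref{th.H-and-Siegel-series}, and your argument spells out precisely that combination, including the identification of the $\frke_i$ attached to $H$ with the $\frke_i(B)$ of Section~4.
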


\section{Proofs of Theorem \ref{th.main-result} and \ref{th.main-result2}}
In this section we prove our main result. 
\begin{lemma} \label{lem.Conrey-Iwaniec}
Let $\kappa \in {1 \over 2} + \ZZ$ such that $\kappa \ge 13/2$.
Let $g(z)=\sum_{m=1}^\infty c_g(h){\bf e}(mz) \in S_\kappa(\varGamma_0(4))$. Then, for any fundamental discriminant $D$ we have
\begin{align*}
c_g(|D|) \ll_{\vep,g} |D|^{\kappa/2-1/3+\vep} \ (\vep>0).
\end{align*}
\end{lemma}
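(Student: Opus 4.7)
The plan is to deduce this lemma by combining the Shimura-Waldspurger correspondence (together with its quantitative Kohnen--Zagier refinement) with the Conrey--Iwaniec subconvexity bound for central values of quadratic twists of $L$-functions of integral weight holomorphic cusp forms, as cited in \cite{CI00}.

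First, I would pass from $g$ to an integral weight cusp form via the Shimura lift. Since $\kappa\in \tfrac12+\ZZ$ with $\kappa\geq 13/2$, write $\kappa=k+1/2$ with $k\in\ZZ_{\geq 6}$, and let $f$ be a suitable integral weight $2k$ cusp form (on $SL_2(\ZZ)$ or of small level) associated to a Hecke eigenform component of $g$. By decomposing $g$ into Hecke eigenforms and working component by component (with an implied constant depending on $g$), one may assume $g$ itself is a Hecke eigenform, possibly after projecting to the Kohnen plus space; fundamental discriminants of the wrong sign yield $c_g(|D|)=0$ and nothing to prove.

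Second, I would invoke the Kohnen--Zagier (or Baruch--Mao) formula, which relates the square of the Fourier coefficient at a fundamental discriminant to a central $L$-value:
\begin{equation*}
|c_g(|D|)|^2 \;\ll_{g}\; |D|^{\,\kappa-1}\, L(1/2,f\otimes \chi_D),
\end{equation*}
where $\chi_D$ is the quadratic character associated with $D$ and $L(s,f\otimes\chi_D)$ is analytically normalized so that $s=1/2$ is the center of symmetry. The exponent $\kappa-1=k-1/2$ is exactly the one appearing in the Kohnen--Zagier identity, and the constant absorbs the Petersson norms of $g$ and $f$ and the archimedean gamma factor.

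Third, I would apply the Conrey--Iwaniec subconvexity estimate from \cite{CI00}: for any $\vep>0$,
\begin{equation*}
L(1/2,f\otimes\chi_D)\;\ll_{f,\vep}\; |D|^{1/3+\vep}.
\end{equation*}
The hypothesis $\kappa\geq 13/2$, i.e.\ $2k\geq 12$, ensures that $f$ is a holomorphic cusp form of even integral weight $\geq 12$, which is within the range treated in \cite{CI00}. Combining the two displays yields
\begin{equation*}
|c_g(|D|)|^2\;\ll_{g,\vep}\; |D|^{\,\kappa-1+1/3+\vep}
\;=\; |D|^{\,\kappa-2/3+\vep},
\end{equation*}
and taking square roots gives the claimed bound $c_g(|D|)\ll_{\vep,g} |D|^{\kappa/2-1/3+\vep}$ (after relabelling $\vep/2\mapsto \vep$). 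The only genuinely hard input is the Conrey--Iwaniec cubic moment bound itself; given it, the lemma is a formal consequence of Kohnen--Zagier.
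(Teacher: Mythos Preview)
Your approach is sound in its core idea and essentially reconstructs what \cite[Corollary~1.3]{CI00} packages, but it differs from the paper's proof. The paper does not unpack Kohnen--Zagier plus $L$-value subconvexity; it directly invokes \cite[Corollary~1.3]{CI00}, which already gives $c_g(n)\ll_{g,\vep} n^{\kappa/2-1/3+\vep}$ for \emph{squarefree} $n$ and arbitrary $g\in S_\kappa(\varGamma_0(4))$. Since an even fundamental discriminant $D$ has $|D|$ divisible by $4$ (hence not squarefree), the paper reduces to the squarefree case via Shimura's Hecke operator $T_{\kappa,1}^4(4)$: writing $D=2^sD'$ with $D'$ odd squarefree and $s\in\{0,2,3\}$, one has $c_g(|D|)=c_{\widetilde g}(2^{s-2}|D'|)$ for $\widetilde g=g|T_{\kappa,1}^4(4)\in S_\kappa(\varGamma_0(4))$, and $2^{s-2}|D'|$ is squarefree, so \cite[Corollary~1.3]{CI00} applies to $\widetilde g$. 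Your route through Kohnen--Zagier has the pleasant feature of treating even and odd fundamental discriminants uniformly, with no Hecke-operator detour.

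There is, however, a genuine gap in your reduction step. The Kohnen--Zagier formula applies to Hecke eigenforms in the plus space $S_\kappa^+(\varGamma_0(4))$, whereas the lemma is stated for arbitrary $g\in S_\kappa(\varGamma_0(4))$; ``projecting to the Kohnen plus space'' does not preserve the individual coefficient $c_g(|D|)$, so that step is not valid as written. To cover the full space along your lines you would need the more general Waldspurger (or Baruch--Mao) formula together with subconvexity for the associated integral-weight forms, possibly of level $>1$. This gap is harmless for the downstream application in this paper, since the form $h$ to which the lemma is applied already lies in $S_{k-n/2+1/2}^+(\varGamma_0(4))$, and for such $h$ your argument is complete. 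But it does not establish the lemma in the generality stated. Note also Remark~\ref{rem.unstability-of-Kohnen-plus-space}: the paper's Hecke-operator step may exit the plus space, which is precisely why the paper needs the full-space version of \cite[Corollary~1.3]{CI00}.
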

\begin{proof} It is known that $D$ can be expressed as $D=2^sD'$,
with $s=0,2$ or $3$ and $D'$ a square free odd integer. Suppose that $s=0$. Then, by \cite[Corollary 1.3]{CI00}, we have
\begin{align*}
c_g(|D|) \ll_{\vep, g} |D|^{\kappa/2-1/3+\vep} \ (\vep>0).
\end{align*}
Suppose that $s \ge 2$. Let $T_{\kappa,1}^4(4)$ be the  operator on $S_{\kappa}(\varGamma_0(4))$ in \cite[page 450]{Sh73}.  Let
\[\widetilde g(z):=g|T_{\kappa,1}^4(4)(z)=\sum_{m=1}^\infty b(m){\bf e}(mz).\]
Then, by Theorem \cite[Theorem 1.7]{Sh73}, we have
$c_g(|D|)=b(2^{s-2}|D'|)$. Since  $\widetilde g$ belongs to $S_\kappa(\varGamma_0(4))$ and $2^{s-2}|D'|$ is  square free,  again by \cite[Corollary 1.3]{CI00}, we have
\begin{align*}
c_g(|D|) =b(2^{s-2}|D'|) \ll_{\vep,\widetilde g} |2^{s-2}D'|^{\kappa/2-1/3+\vep} \ll_{\vep,g} |D|^{\kappa/2-1/3+\vep} \ (\vep >0).
\end{align*}
This completes the proof.
\end{proof}
\begin{remark}\label{rem.unstability-of-Kohnen-plus-space}
For $g \in S_\kappa^+(\varGamma_0(4))$, $g|T_{\kappa,1}^4(4)$ does not necessarily belong to $S_\kappa^+(\varGamma_0(4))$. 
\end{remark}
For an element $C \in \calh_r(\ZZ)_{>0}$, put
\[\boldsymbol{\Delta}(C)=\begin{cases} \frkf_C^2 & \text{ if } r \text{ is even} \\
2^{r-1}\det C & \text{ if } r \text{ is odd.}
\end{cases}\]
Let $B =(b_{ij}) \in \calh_n(\ZZ)_{>0}$. For each prime number $p$, we denote by  $\GK(B)^{(p)}=\underline a^{(p)}=(a_1^{(p)},\ldots,a_n^{(p)})$ the Gross-Keating invariant of $B$ viewing $B$ as an element of $\calh_n(\ZZ_p)$, and for $r=1,\ldots,n$ put
\[\frke_{r,B}^{(p)}=\begin{cases}
2[(a_1^{(p)}+\cdots+a_r^{(p)})/2] & \text{ if } r \text{ is even}\\
a_1^{(p)}+\cdots+a_r^{(p)} & \text{ if } r \text{ is odd},
\end{cases}\]
and in particular put $\frke_B^{(p)}=\frke_{n,B}^{(p)}$.
Moreover, for $r \le n$, put
\[\mathscr{E}_r(B)=\mathrm{GCD}_{X \in M_{nr}(\ZZ) \atop  B[X] >0}
\boldsymbol{\Delta}(B[X]),\]
\[D_r(B)=\mathrm{GCD}_{X \in M_{nr}(\ZZ) \atop  B[X] >0} 2^{2[r/2]} \det B[X].\]
Clearly $\scre_r(B)$ divides $D_r(B)$,
\begin{lemma}\label{lem.estimate-of-GGK}
Let $B \in \calh_n(\ZZ)_{>0}$ with $n$ even.  
\begin{itemize}
\item[(1)] For any positive integer $r \le n$, the product 
$\prod_{p | \frkf_B} p^{\frke_{r,B}^{(p)}} $ divides $\scre_r(B)$.
In particular, we have
\[\prod_{p \frkf_B} p^{\frke_{n,B}^{(p)}} =\frkf_B^2.\]
\item[(2)] For any positive integer $r \le n$, 
$G_r(B)=D_r(B)$.
\end{itemize}
\end{lemma}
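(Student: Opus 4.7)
My plan is to reduce both parts prime-by-prime to the local results of Section~4, combined with a strong-approximation argument that identifies the $p$-adic valuation of each global GCD with the corresponding local minimum at every prime $p$. The approximation principle I shall use is: given a local optimizer $X_p \in M_{n,r}(\ZZ_p)$ with $\det B[X_p] \neq 0$, one can choose $X \in M_{n,r}(\ZZ)$ with $X \equiv X_p \pmod{p^N}$ for $N$ so large that $B[X]$ and $B[X_p]$ become $GL_r(\ZZ_p)$-equivalent (hence share every local invariant at $p$); since $X_p$ has full rank over $\QQ_p$, one may arrange $X$ to have full rank over $\QQ$ (a Zariski-open condition, easily enforced by adjusting at primes other than $p$), and then $B > 0$ forces $B[X] > 0$.

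For part~(1), the first step is to observe that $\ord_p(\boldsymbol{\Delta}(B[X])) = \frke^{(p)}_{B[X]}$ at every prime $p$: when $r$ is even, $\boldsymbol{\Delta}(B[X]) = \frkf_{B[X]}^2$ equals $D_{B[X]}/\frkd_{B[X]}$ up to sign, whose $p$-adic valuation is $\ord_p(D_{B[X]}) - \ord_p(\frkd_{B[X]}) = \frke^{(p)}_{B[X]}$; when $r$ is odd, $\boldsymbol{\Delta}(B[X]) = 2^{r-1} \det B[X] = \pm D_{B[X]}$, so the valuation is $\ord_p(D_{B[X]}) = \frke^{(p)}_{B[X]}$. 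Consequently $\ord_p(\scre_r(B)) = \min_{X,\,B[X]>0} \frke^{(p)}_{B[X]}$, which by the approximation principle coincides with ${\bf f}_r(B \otimes \ZZ_p)$. Theorem~\ref{th.estimate-of-GK} then yields $\frke^{(p)}_{r,B} \le {\bf f}_r(B \otimes \ZZ_p) = \ord_p(\scre_r(B))$, so $p^{\frke^{(p)}_{r,B}}$ divides $\scre_r(B)$, and multiplying over $p \mid \frkf_B$ gives the claimed divisibility. The identity at $r = n$ then follows from Theorem~\ref{th.GK-invariant} together with the relation $\frke_B^{(p)} = 2 \ord_p(\frkf_B)$ valid for $n$ even: $\prod_{p \mid \frkf_B} p^{\frke^{(p)}_{n,B}} = \prod_{p \mid \frkf_B} p^{2 \ord_p(\frkf_B)} = \frkf_B^2$.

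For part~(2), the same scheme yields $\ord_p(G_r(B)) = \min_{({\bf i},{\bf j})} \ord_p(b^{(r)}_{{\bf i},{\bf j}}(B)) = {\bf g}_r(B \otimes \ZZ_p)$ directly from the definition of the GCD (the integers $b^{(r)}_{{\bf i},{\bf j}}(B)$ are the same whether read over $\ZZ$ or $\ZZ_p$), and $\ord_p(D_r(B)) = \min_X \ord_p(2^{2[r/2]} \det B[X])$, which by the approximation principle matches the local quantity appearing in Theorem~\ref{th.estimate-of-GK2}. Applying the local identity ${\bf d}_r = {\bf g}_r$ of Theorem~\ref{th.estimate-of-GK2} then equates these $p$-adic valuations at every prime $p$, giving $G_r(B) = D_r(B)$. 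The main obstacle throughout is the strong-approximation step: one must simultaneously enforce $X \equiv X_p \pmod{p^N}$ (to preserve the local invariant at $p$) and $X$ of full rank over $\QQ$ (to ensure $B[X] > 0$). Both are generic, compatible conditions, but they must be combined carefully; once this is in place, the lemma reduces cleanly to the local Theorems~\ref{th.estimate-of-GK} and~\ref{th.estimate-of-GK2}.
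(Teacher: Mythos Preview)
Your approach is correct and is essentially the one taken in the paper: both parts are reduced prime-by-prime to the local Theorems~\ref{th.GK-invariant}, \ref{th.estimate-of-GK}, and \ref{th.estimate-of-GK2}, with an approximation step to match the global GCDs against the local minima.

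The one difference worth noting is that for part~(1) you do more than is needed. You prove the \emph{equality} $\ord_p(\scre_r(B))={\bf f}_r(B\otimes\ZZ_p)$, and for this you invoke the non-trivial direction of approximation (a Hensel-type statement that $B[X]$ and $B[X_p]$ become $GL_r(\ZZ_p)$-equivalent once $X\equiv X_p\pmod{p^N}$ for $N$ large). The paper dispenses with this entirely: since every global $X$ with $B[X]>0$ is in particular a local $X$ with $\det B[X]\ne 0$, one has immediately ${\bf f}_r(B\otimes\ZZ_p)\le \ord_p(\scre_r(B))$, and Theorem~\ref{th.estimate-of-GK} gives $\frke_{r,B}^{(p)}\le {\bf f}_r(B\otimes\ZZ_p)$. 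The divisibility then follows without any local-to-global passage. Your stronger equality is true (and your sketch of the Hensel step is fine), but it is not required here. For part~(2) the two arguments coincide: the paper also approximates a local $X$ by a global $X_0$, but only needs to preserve $\ord_p(\det B[X])$ (taking $X_0\equiv X\pmod{p^e}$ with $e>\ord_p(\det B[X])$ suffices), which is weaker than the $GL_r(\ZZ_p)$-equivalence you set up; positivity of $B[X_0]$ follows automatically since $\det B[X_0]\ne 0$ forces $X_0$ to have full rank.
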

\begin{proof}
The assertion (1) follows from Theorems \ref{th.GK-invariant}, \ref{th.estimate-of-GK}. We prove (2). By Lemma \ref{lem.det-of-H[X]}, $G_r(B)$ divides $D_r(B)$.
To prove that  $D_r(B)$ divides $G_r(B)$, for 
each prime number $p$, we denote by ${\bf g}_r(B)^{(p)}$ and ${\bf d}_r(B)^{(p)}$ the quantities  ${\bf g}_r(B)$ and ${\bf d}_r(B)$ defined in Section 3, respectively, viewing $B$ as an element of $\calh_n(\ZZ_p)^{\rm nd}$.
Let $X \in M_{nr}(\ZZ_p)$ and suppose that $\det B[X] \not=0$. Then we can take $X_0 \in M_{nr}(\ZZ)$ such that $X_0 \equiv X \text{ mod } p^eM_{nr}(\ZZ_p)$ with $e > \ord_p(\det B[X])$. Then, by definition, we have
\[\ord_p(D_r(B)) \le \ord_p(\det B[X_0])=\ord_p(\det B[X]).\]
By Theorem \ref{th.estimate-of-GK2}, this implies that 
\[\ord_p(D_r(B)) \le {\bf d}_r(B)^{(p)} ={\bf g}_r(B)^{(p)},\]
and hence 
$D_r(B)$ divides $\prod_p p^{{\bf g}_r(B)^{(p)}}=G_r(B)$.
This proves the assertion (2). 
\end{proof}

The following theorem is a refined version of Theorem \ref{th.main-result}.
\begin{theorem}\label{th.refined-estimate}
Let the notation and the assumption be as in Theorem \ref{th.main-result}.
Let $B \in \calh_n(\ZZ)_{>0}$. The we have the following estimates.
\begin{itemize}
\item[(1)] We have
\begin{align*}
&|c_{I_n(h)}(B)| \le |c_h(|\frkd_B|)|\frkf_B^{k-1}
\prod_{i=1}^n \prod_{p | \frkf_B}(1+\frke_{i,B}^{(p)})).
\end{align*}
\item[(2)]  We have
\begin{align*}
&|c_{I_n(h)}(B)| \le |c_h(|\frkd_B|)|\frkf_B^{k-(n+1)/2}
\prod_{i=1}^{n-1} \scre_i(B)^{1/2} \prod_{i=1}^n \prod_{p | \frkf_B}(1+\frke_{i,B}^{(p)})),
\end{align*}
and in particular we have
\begin{align*}
&|c_{I_n(h)}(B)| \le |c_h(|\frkd_B|)|\frkf_B^{k-(n+1)/2}
\prod_{i=1}^{n-1} G_i(B)^{1/2} \prod_{i=1}^n \prod_{p | \frkf_B}(1+\frke_{i,B}^{(p)})).
\end{align*}

\end{itemize}
\end{theorem}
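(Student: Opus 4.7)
The plan is to combine the defining formula
\[c_{I_n(h)}(B) = c_h(|\frkd_B|)\,\frkf_B^{k-(n+1)/2}\prod_p \widetilde F_p(B,\alpha_f(p))\]
with the pointwise bounds of $\widetilde F_\frkp$ from Theorem \ref{th.estimate-of-F} and the local--global identities packaged in Lemma \ref{lem.estimate-of-GGK}. Since $f \in S_{2k-n}(SL_2(\ZZ))$ is a normalized Hecke eigenform, Deligne's theorem yields $|\alpha_f(p)|=1$ for every $p$, which is precisely the input $r_0=0$ needed to apply Theorem \ref{th.estimate-of-F} at each place.

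For part (1), insert estimate (1) of Theorem \ref{th.estimate-of-F} into the Euler product to obtain
\[\prod_p|\widetilde F_p(B,\alpha_f(p))| \le \Bigl(\prod_p p^{\frke_B^{(p)}}\Bigr)^{(n-1)/4}\,\prod_{p}\prod_{i=1}^n(1+\frke_{i,B}^{(p)}).\]
The crux is the global identity $\prod_p p^{\frke_B^{(p)}} = \frkf_B^2$. This should follow from Theorem \ref{th.GK-invariant} combined with the two algebraic facts $D_B=(-1)^{n/2}\frkd_B\frkf_B^2$ and $\frkD_B=\frkd_B\ZZ$, which together force $\frke_B^{(p)} = \ord_p(D_B)-\ord_p(\frkD_B) = 2\ord_p\frkf_B$. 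In particular $\frke_{n,B}^{(p)}=0$ for $p\nmid\frkf_B$, and since the partial sums $a_1^{(p)}+\cdots+a_i^{(p)}$ are nondecreasing, $\frke_{i,B}^{(p)}\le\frke_{n,B}^{(p)}$, so the residual product of $(1+\frke_{i,B}^{(p)})$ collapses to $p\mid\frkf_B$. Combining these factors with $\frkf_B^{k-(n+1)/2}$ yields $\frkf_B^{k-1}$ and recovers (1).

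For part (2), use estimate (2) of Theorem \ref{th.estimate-of-F} instead, which contributes $p^{\sum_{i=1}^{n-1}\frke_{i,B}^{(p)}/2}$ at each $p$. Swapping the order of the two products gives
\[\prod_p p^{\sum_{i=1}^{n-1}\frke_{i,B}^{(p)}/2} = \prod_{i=1}^{n-1}\Bigl(\prod_{p\mid \frkf_B} p^{\frke_{i,B}^{(p)}}\Bigr)^{1/2} \le \prod_{i=1}^{n-1}\scre_i(B)^{1/2},\]
the final inequality being Lemma \ref{lem.estimate-of-GGK}(1). This establishes the first estimate in (2). The ``in particular'' statement follows from the divisibility $\scre_i(B)\mid G_i(B)$, which is immediate from Lemma \ref{lem.estimate-of-GGK}(2) together with the observation that $\boldsymbol{\Delta}(C)$ divides $2^{2[r/2]}\det C$ for every $C$.

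The principal obstacle is bookkeeping rather than analysis: one must confirm that $\frke_B^{(p)}=2\ord_p\frkf_B$ globally and hence that $\frke_{i,B}^{(p)}$ vanishes outside the support of $\frkf_B$, so that an a priori infinite Euler product in fact reduces to the advertised finite one indexed by $p\mid\frkf_B$. Once these local--global statements are assembled from Theorem \ref{th.GK-invariant} and Lemma \ref{lem.estimate-of-GGK}, both estimates become a direct tally of exponents with no further analytic input.
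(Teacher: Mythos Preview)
Your proposal is correct and follows essentially the same route as the paper: start from the defining Euler product for $c_{I_n(h)}(B)$, use $|\alpha_f(p)|=1$ to apply Theorem \ref{th.estimate-of-F} with $r_0=0$ at each prime, and then invoke Lemma \ref{lem.estimate-of-GGK} (together with Theorem \ref{th.GK-invariant}) to convert the local exponents $\frke_{i,B}^{(p)}$ into the global quantities $\frkf_B$, $\scre_i(B)$, and $G_i(B)$. Your additional remarks explaining why the product collapses to $p\mid\frkf_B$ and why $\frke_{i,B}^{(p)}\le\frke_{n,B}^{(p)}$ are correct and make explicit what the paper leaves implicit.
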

\begin{proof}
From now on we write $\widetilde F_p(B,X)$ instead of $\widetilde F_\frkp(B,X)$ if $\frkp=(p)$.
We have
\begin{align*}
c_{I_n(h)}(B)=c_h(|\frkd_B|)\frkf_B^{k-(n+1)/2} \prod_{p | \frkf_B} \widetilde F_p(B,\alpha_p).
\end{align*}
We have $|\alpha_p|=1$ for any prime number $p$. Hence, by Theorem \ref{th.estimate-of-F} (1) and  Lemma \ref{lem.estimate-of-GGK}, we have 
\begin{align*}
&|\prod_{p | \frkf_B} \widetilde F_p(B,\alpha_p)| \\
&\le  \prod_{p | \frkf_B}p^{(n-1)\frke_B^{(p)}/4} \prod_{i=1}^n \prod_{p | \frkf_B}(1+\frke_{i,B}^{(p)}))\\
&=\frkf_B^{(n-1)/2}\prod_{i=1}^n \prod_{p | \frkf_B}(1+\frke_{i,B}^{(p)})).
\end{align*}
This proves the assertion (1). Similarly, by Theorem \ref{th.estimate-of-F} (2), we have 
\begin{align*}
&|\prod_{p | \frkf_B} \widetilde F_p(B,\alpha_p)| \\
&\le  \frkf_B^{k-(n+1)/2}\prod_{i=1}^{n-1} \prod_{p | \frkf_B} p^{\frke_{i,B}^{(p)}/2} \prod_{i=1}^n \prod_{p | \frkf_B}(1+\frke_{i,B}^{(p)})).
\end{align*}
By Lemma \ref{lem.estimate-of-GGK} (1) and the remark before it, we have
\[\prod_{p | \frkf_B} p^{\frke_{i,B}^{(p)}/2} \le \scre_i(B)^{1/2} \le D_i(B)^{1/2}=G_i(B)^{1/2}.\]
This proves the assertion (2).

\end{proof}

{\bf Proofs of Theorems \ref{th.main-result} and \ref{th.main-result2}}

 By Theorem \ref{th.GK-invariant}, we have
$\frke_B^{(p)}=2\ord_p(\frkf_B)$ and $e_{i,B} \le \ord_p(\det (2B))$ for any $i=1,\ldots,n$.
Hence we have
\[\prod_{p | \frkf_B} (1+\frke_{i,B}^{(p)}) \le d(\det (2B))\]
for any $i=1,\ldots,n$, where $d(a)$ is the number of positive divisors of $a$ for $a \in \ZZ_{>0}$.
Hence, by Theorem \ref{th.refined-estimate} (1), we have
\begin{align*}
|c_{I_n(h)}(B)| & \le |c_h(|\frkd_B|)|\frkf_B^{k-1} d(\det (2B))^n\\
&= |c_h(|\frkd_B|)|\frkd_B|^{-k/2 +1/2}d(\det (2B))^n\det (2B)^{(k-1)/2}\\
\end{align*}
By Lemma \ref{lem.Conrey-Iwaniec} we have 
\begin{align*}
c_h(|\frkd_B|) \ll_{\vep,h} |\frkd_B|^{k/2-n/4-1/12+\vep} \quad (\vep >0)
\end{align*}
Hence we have
\begin{align*}
&c_{I_n(h)}(B) \\
&\ll_{\vep, I_n(h)} |\frkd_B|^{-n/4+5/12+\vep} 
d(\det (2B))^n(\det (2B))^{(k-1)/2},
\end{align*}

We have  $|\frkd_B|^{\vep} \le \det (2B)^{\vep}$ and 
\begin{align*}
d(\det (2B))^n  \ll_{\vep} (\det (2B))^\vep \quad (\vep >0)
\end{align*}
for any $B \in \calh_n(\ZZ)_{>0}$. 
Thus we complete the proof of Theorem \ref{th.main-result}.
Similarly, we can prove Theorem \ref{th.main-result2}.


\begin{thebibliography}{99}
\bibitem{BK93}
S. B\"ocherer and W. Kohnen, \emph{Estimates for Fourier coefficients of Siegel cusp forms}, Math. Ann. \textbf{297} (1993), 499-517.
\bibitem{BR88}
S. B\"ocherer and S. Raghavan, \emph{On Fourier coefficients of Siegel modular forms}, J. reine angew. Math. \textbf{384} (1988), 80-101. 
\bibitem{Breulmann96} S. Breulmann, \emph{Absch\"atzungen f\"ur Fourierkoeffizienten Siegelcher Spitzenformen vom Geschlecht 3}, Doktorarbeit, Universit\"at Heidelberg (1996)
\bibitem{Bringmann} K. Bringmann, \emph{Improved bounds for Fourier coefficients of Siegel modular forms}, J.  Math. Anal. Appl. \textbf{451} (2017), 1123-1132
\bibitem{CI00}
J. B. Conrey and H. Iwaniec, \emph{The cubic moment of central values of automorphic $L$-functions}, Ann. of. Math.
\textbf{151} (2000), 1175-1216.
\bibitem{F87}
O. M.  Fomenko, \emph{Fourier coefficients of Siegel cusp forms of genus n}, J. Soviet. Math. \textbf{38} (1987), 2148-2157. 

\bibitem{De74}
P.  Deligne, \emph{La conjecture de Weil I}, Publ. Math. Inst. Hautes Etud. Sci. \textbf{43} (1974), 273-307.

\bibitem {I01}T. Ikeda, \emph{On the lifting of elliptic modular forms to 
Siegel cusp forms of degree $2n$}, 
Ann. of Math. \textbf{154} (2001), 641--681.
\bibitem{IK18}
 T. Ikeda and H. Katsurada, \emph{On the Gross-Keating invariants of a quadratic forms over a non-archimedean local field}, 
Amer. J. Math. \textbf{140} (2018), 142-180. 
\bibitem{IK22}
 T. Ikeda and H. Katsurada, \emph{An explicit formula of a quadratic form over a non-archimedean local field}, To appear in J. reine angew. Math.  \textbf{783} (2022),1-47.

\bibitem {IY20}
T. Ikeda and S. Yamana, \emph{On the lifting of Hilbert cusp forms to Hilbert-Siegel cusp forms}, Ann. Sci. \'Ec. Norm. Sup\'er. \textbf{53}(2020), 1121-1181.




\bibitem{Ki84}
Y.  Kitaoka, \emph{Fourier coefficients of Siegel cusp forms of degree two}, Nagoya Math. J. \textbf{93} (1984), 149-171. 


\bibitem
{Ko} W. Kohnen, 
\emph{Modular forms of half-integral weight on $\Gamma_0(4)$}, 
Math. Ann. {\bf 248} (1980), 249--266. 

\bibitem{Ko93}
W. Kohnen, \emph{Estimates for Fourier coefficients of Siegel cusp forms of degree two}. Compositio Math. \textbf{87} (1993), 231-240 

\bibitem{Omeara73}
{O.~T.~O'Meara},
\emph{Introduction to Quadratic Forms},
Springer, (1973).

\bibitem{RW89} S.  Raghavan and R. Weissauer,  \emph{Estimates for Fourier coefficients of cusp forms}, London Math. Soc. Lect. Notes \textbf{134} (1989), 87--102.

\bibitem{Satake75}
I. Satake, \emph{Linear Algebra}, Marcel Dekker INC, New York


\bibitem{Sh73}
G. Shimura, \emph{On modular forms pf half-integral weight}, Ann. of  Math. \textbf{97} (1973), 440-481.

\bibitem {Sh1}
G. Shimura, \emph{Euler products and Eisenstein series}, CBMS Regional Conference Series in Mathematics, \textbf{93}(1997) AMS.

\bibitem{Y04}
T. Yang, \emph{Local densities of $2$-adic quadratic forms}, J. Number Theory \textbf{108} (2004), 287-345.









\end{thebibliography}
\end{document}